\documentclass[a4paper,10pt,twoside]{article}

\usepackage[english]{babel}
\usepackage{amsmath,amsthm}
\usepackage{amsfonts}

\usepackage{enumerate}
\usepackage{hyperref}
\usepackage{graphicx}

\pagestyle{myheadings}

\markboth{Z. Nika and T. Szabados}{Strong approximation of Black--Scholes theory}

\newtheorem{thmA}{Theorem}
\newtheorem{thm}{Theorem}

\newtheorem{lem}{Lemma}

\newcommand*{\di}{\, \mathrm{d} }

\sloppy


\begin{document}

\title{Strong approximation of Black--Scholes theory based on simple random walks}

\author{Zsolt Nika\footnote{Current address: Centrum Wiskunde \& Informatica (CWI), P.O. Box 94079
1090 GB Amsterdam, Netherlands, e-mail: nika@cwi.nl} \\
and Tam\'as Szabados\footnote{Corresponding author, address:
Department of Mathematics, Budapest University of Technology and Economics, M\H{u}egyetem rkp. 3, H
\'ep. V em. Budapest, 1521, Hungary, e-mail: szabados@math.bme.hu, telephone: (+36 1)
463-1111/ext. 5907, fax: (+36 1) 463-1677} \\
Budapest University of Technology and Economics}



\maketitle

\begin{center}
\emph{Dedicated to Endre Cs\'aki and P\'al R\'ev\'esz on the occasion of their 80th birthdays}
\end{center}

\medskip


\begin{abstract}
A basic model in financial mathematics was introduced by Black, Scholes and Merton in 1973 (BSM model). A classical discrete approximation in distribution is the binomial model given by Cox, Ross and Rubinstein in 1979 (CRR model). The BSM and the CRR models have been used for example to price European call and put options. Our aim in this work is to give a strong (almost sure, pathwise) discrete approximation of the BSM model using a suitable nested sequence of simple, symmetric random walks. The approximation extends to the stock price process, the value process, the replicating portfolio, and the greeks. An important tool in the approximation is a discrete version of the Feynman--Kac formula as well. It is hoped that such a discrete pathwise approximation can be useful for example when teaching students whose mathematical background is limited, e.g. does not contain measure theory or stochastic analysis.
\end{abstract}


\medskip

2010 \emph{MSC.} Primary 60F15. Secondary 60H30, 91G10, 97M30.

\emph{Keywords and phrases:} financial mathematics, strong approximation, random walk, discrete Feynman--Kac formula, European option.

\bigskip


\section{Introduction} \label{sec:Intro}

A basic model in financial mathematics was introduced by Black, Scholes and Merton in 1973 \cite{BS1973,Mer1973} (BSM model). Though clearly it is a rather rough model of real financial markets, its big advantage is that it can be handled by relatively simple stochastic analysis tools. A classical discrete approximation in distribution is the binomial model given by Cox, Ross and Rubinstein in 1979 \cite{CRR1979} (CRR model). The BSM and the CRR models have been used for example to price European call and put options.

Our aim in this work is to give a strong (almost sure, pathwise) discrete approximation of the BSM model using a suitable nested sequence of simple, symmetric random walks, the so-called \emph{``twist and shrink''} method, see Section \ref{sec:Pre} below. This basically means that our approximation scheme is driven by an infinite matrix $X_m(k)$ $(m=0, 1, 2, \dots; \; k=1, 2, 3, \dots)$ of independent, fair coin-tosses. The approximation extends to the stock price process, the value process, the replicating portfolio, and the greeks. An important tool in the approximation is a discrete version of the Feynman--Kac formula as well that implies the continuous version as a limit.

It is hoped that such a discrete pathwise approximation can be useful for example when teaching students whose mathematical background is limited, e.g. does not contain measure theory or stochastic analysis. For the sake of brevity, in the present paper we do not attempt to use only elementary tools; however, we think that our method can be presented in an elementary, though longer, way as well, using only the tools of a typical calculus sequence and an introductory probability course. The results of the classical BSM model rigorously follow from our elementary, discrete model by taking limits. For sake of simplicity, we limited our discussion to the case of a single risky and a single riskless assets and claims of the form $g(S(T))$, where $T > 0$ is the maturity time, $S$ is the price of the risky asset and the function $g$ satisfies suitable assumptions. We feel though that our method can be generalized to the multidimensional case and to more general claims as well. Finally, we mention that our work was partly influenced by a paper by Mikl\'os Cs\"org\H{o} \cite{Cso1999}, which had partly similar (and partly different) goals.


\section{Preliminaries of a discrete approximation: ``twist and shrink''} \label{sec:Pre}

A basic tool of the present paper is an elementary construction of Brownian motion. The specific
construction used in the sequel, taken from \cite{Szab1996}, is based on a nested sequence of simple,
symmetric random walks that uniformly converges to Brownian motion (BM = Wiener process) on bounded intervals with probability $1$. This will be called \emph{``twist and shrink''} construction. This method is a modification of the one given by Frank Knight in 1962 \cite{Kni1962} and its simplification by P\'al R\'ev\'esz in 1990 \cite{Rev1990}.

We summarize the major steps of the ``twist and shrink'' construction here. We start with \emph{a sequence of independent simple, symmetric random walks} (abbreviated: RW)
\[
S_m(0) = 0, \quad S_m(n) = \sum_{k=1}^{n} X_m(k) \quad (n \ge 1),
\]
based on an infinite matrix of independent and identically distributed random variables $X_m(k)$,
\begin{equation}\label{eq:Xmk}
\mathbb{P} \left\{ X_m(k)= \pm 1 \right\} = \frac12 \qquad (m\ge 0, k\ge 1),
\end{equation}
defined on the same complete probability space $(\Omega,\mathcal{F},\mathbb{P})$. (All stochastic processes in the sequel will be defined on this probability space.) Each random walk is a basis of an approximation of Brownian motion with a dyadic step size $\Delta t=2^{-2m}$ in time and a corresponding step size $\Delta x=2^{-m}$ in space.

The second step of the construction is \emph{twisting}. From the independent RW's we want to create
dependent ones so that after shrinking temporal and spatial step sizes, each consecutive RW becomes a
refinement of the previous one.  Since the spatial unit will be halved at each consecutive row, we
define stopping times by $T_m(0)=0$, and for $k\ge 0$,
\[
T_m(k+1)=\min \{n: n>T_m(k), |S_m(n)-S_m(T_m(k))|=2\} \qquad (m\ge 1)
\]
These are the random time instants when a RW visits even integers, different from the previous one.
After shrinking the spatial unit by half, a suitable modification of this RW will visit the same
integers in the same order as the previous RW. In other words, if $\widetilde{S}_{m-1}$ visits the integers $i_0=0, i_1, i_2, i_3, \dots$, $(i_j \ne i_{j+1})$, then we want that the twisted random walk $\widetilde{S}_m$ visit the even integers $2i_0, 2i_1, 2i_2, 2i_3$ in this order.

We operate here on each point $\omega\in\Omega$ of the
sample space separately, i.e. we fix a sample path of each RW. We define twisted RW's $\widetilde{S}_m$
recursively for $k=1,2,\dots$ using $\widetilde{S}_{m-1}$, starting with $\widetilde{S}_0(n)=S_0(n)$ $(n\ge
0)$ and $\widetilde{S}_m(0) = 0$ for any $m \ge 0$. With each fixed $m$ we proceed for $k=0,1,2,\dots$
successively, and for every $n$ in the corresponding bridge, $T_m(k)<n\le T_m(k+1)$. Each bridge is
flipped if its sign differs from the desired:
$\widetilde{X}_m(n) = \pm  X_m(n)$, depending on whether $S_m(T_m(k+1)) - S_m(T_m(k))
= 2\widetilde X_{m-1}(k+1)$ or not. So $\widetilde{S}_m(n)=\widetilde{S}_m(n-1)+\widetilde{X}_m(n)$.

Then $(\widetilde{S}_m(n))_{n\ge 0}$ is still a
simple symmetric RW \cite[Lemma 1]{Szab1996}. The twisted RW's have the desired refinement property:
\[
\widetilde{S}_{m+1}(T_{m+1}(k)) = 2 \widetilde{S}_{m}(k) \qquad (m\ge 0, k\ge 0).
\]

The third step of the RW construction is \emph{shrinking}. The sample paths of $\widetilde{S}_m(n)$ $(n\ge
0)$ can be extended to continuous functions by linear interpolation, this way one gets
$\widetilde{S}_m(t)$ $(t\ge 0)$ for real $t$. The $mth$ \emph{``twist and shrink'' RW} is defined by
\begin{equation}\label{eq:TAS}
B_m(t)=2^{-m}\widetilde{S}_m(t2^{2m}).
\end{equation}
Then the \emph{refinement property} takes the form
\begin{equation}
B_{m+1}\left(T_{m+1}(k)2^{-2(m+1)}\right) = B_m \left( k2^{-2m}\right) \qquad (m\ge
0,k\ge 0). \label{eq:refin}
\end{equation}
Note that a refinement takes the same dyadic values in the same order as the previous shrunken walk,
but there is a \emph{time lag} in general:
\begin{equation} T_{m+1}(k)2^{-2(m+1)} - k2^{-2m} \ne 0 .
\label{eq:tlag}
\end{equation}

Now let us recall an important fact from \cite{Szab1996} and \cite{SzaSze2009} about the ``twist
and shrink'' construction that will be used in the sequel.

\begin{thmA} \label{th:Wiener}
The sequence of ``twist and shrink'' random walks ˜$B_m$ uniformly converges to Brownian motion $B$ on bounded intervals, almost surely. For all $T > 0$ fixed, as $m \to \infty$,
\begin{equation}\label{eq:Twist_n_Shrink}
\sup_{0 \le t \le T} |B(t) - B_m(t)| = O\left(m^{\frac34} 2^{-\frac{m}{2}}\right) \qquad \text{a.s.}
\end{equation}
\end{thmA}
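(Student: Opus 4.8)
The plan is to exploit the refinement property~\eqref{eq:refin} together with the smallness of the time lag~\eqref{eq:tlag}. Since $B_{m+1}$ takes the same dyadic values in the same order as $B_m$, the two paths can differ only because of the time lag and of the linear interpolation inside the bridges, so a telescoping estimate will give both the existence of the limit and the rate. Concretely, for $0\le t\le T$ one writes
\[
B(t)-B_m(t)=\sum_{j=m}^{\infty}\bigl(B_{j+1}(t)-B_j(t)\bigr),
\]
so it suffices to bound $d_j:=\sup_{0\le t\le T}|B_{j+1}(t)-B_j(t)|$ and to show $\sum_{j\ge m}d_j=O(m^{3/4}2^{-m/2})$ a.s.; this simultaneously shows $(B_m)$ is uniformly Cauchy on $[0,T]$, so that $B$ exists and is continuous. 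If $t$ lies in the $k$-th bridge of $B_{j+1}$, then by~\eqref{eq:refin} one has $B_{j+1}\bigl(T_{j+1}(k)2^{-2(j+1)}\bigr)=B_j(k2^{-2j})$, the value $B_{j+1}(t)$ differs from $B_j(k2^{-2j})$ by at most one spatial step $2^{-j}$, and $B_j(t)$ differs from $B_j(k2^{-2j})$ by the oscillation of $B_j$ over a time interval whose length is at most the time lag plus one bridge length.

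The first real step is to control the time lag. Put $\tau_j(i):=T_j(i)-T_j(i-1)$; by the strong Markov property these are i.i.d., distributed as the first time a simple symmetric walk started at $0$ reaches $\pm2$, so $\mathbb{E}\tau=4$ (optional stopping applied to $S_n^2-n$) and $\tau$ has finite exponential moments near $0$, its tails being geometric. Hence $T_{m+1}(k)-4k=\sum_{i=1}^k(\tau_{m+1}(i)-4)$ is a centered random walk with exponential moments, and applying Doob's maximal inequality to the submartingale $e^{\lambda(T_{m+1}(k)-4k)}$ gives, for moderate $x$,
\[
\mathbb{P}\Bigl\{\max_{0\le k\le T2^{2m}}|T_{m+1}(k)-4k|>x\Bigr\}\le C\exp\bigl(-c\,x^2 2^{-2m}\bigr).
\]
Taking $x=A\,2^m\sqrt m$ and applying the Borel--Cantelli lemma yields
\[
L_m:=\max_{0\le k\le T2^{2m}}\bigl|T_{m+1}(k)2^{-2(m+1)}-k2^{-2m}\bigr|=O\bigl(2^{-m}\sqrt m\bigr)\qquad\text{a.s.},
\]
while the same bound gives $\max_{k\le T2^{2m}}\tau_{m+1}(k)=O(m)$ a.s., so a single bridge contributes only $O(m2^{-2m})$, negligible beside $L_m$.

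It remains to pass from the time lag to the path distance, and here the exponent $\tfrac34$ appears. We need the oscillation of $B_j$ over time intervals of length $\delta_j:=L_j+O(j2^{-2j})=O(2^{-j}\sqrt j)$. Since $B_j(t)=2^{-j}\widetilde S_j(t2^{2j})$ with $\widetilde S_j$ a simple symmetric walk, a Bernstein inequality for $\pm1$-sums, a union bound over the $O(2^{2j})$ relevant subintervals, and Borel--Cantelli give a L\'evy-type modulus of continuity,
\[
\sup_{\substack{0\le s,t\le T+1\\ |t-s|\le\delta}}|B_j(t)-B_j(s)|=O\Bigl(\sqrt{\delta\log(1/\delta)}\Bigr)\qquad\text{a.s.},
\]
valid for all large $j$ simultaneously. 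Substituting $\delta=\delta_j$ gives $\sqrt{2^{-j}\sqrt j\cdot j\log 2}=O(j^{3/4}2^{-j/2})$, and adding the spatial term $O(2^{-j})$ we obtain $d_j=O(j^{3/4}2^{-j/2})$ a.s. Summing the essentially geometric series $\sum_{j\ge m}j^{3/4}2^{-j/2}$, which is dominated by a constant multiple of its first term, yields $\sup_{0\le t\le T}|B(t)-B_m(t)|=O(m^{3/4}2^{-m/2})$, i.e.~\eqref{eq:Twist_n_Shrink}. Finally, that $B$ is a \emph{standard} Brownian motion follows from Donsker's theorem: each $B_m$ converges in distribution to Brownian motion on $C[0,T]$, the almost sure convergence $B_m\to B$ forces $B_m\Rightarrow B$, and uniqueness of weak limits identifies the law of $B$ with Wiener measure.

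The main obstacle is the modulus-of-continuity step carried out uniformly in $j$ without circularity --- one may not simply invoke L\'evy's modulus for the limit $B$, since that already presupposes a convergence rate --- together with the bookkeeping that matches the $\sqrt j$ coming from the time lag against the $\sqrt{\log(1/\delta)}$ from the modulus so as to land precisely on the exponent $\tfrac34$. Once these are in place, the maximal and exponential inequalities and the Borel--Cantelli summations are routine.
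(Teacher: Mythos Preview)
The paper does not actually prove Theorem~A: it is stated as a fact ``recall[ed] from \cite{Szab1996} and \cite{SzaSze2009}'' and is used without proof in the sequel. So there is no in-paper argument to compare your proposal against.

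That said, your outline is essentially the argument carried out in those references. The two-step scheme---first control the time lag $\max_{k\le T2^{2m}}|T_{m+1}(k)2^{-2(m+1)}-k2^{-2m}|$ via exponential/maximal inequalities and Borel--Cantelli to get $O(2^{-m}\sqrt m)$, then feed this into a L\'evy-type modulus of continuity for the shrunken walks to obtain $d_j=O(j^{3/4}2^{-j/2})$, and finally telescope---is precisely the route taken in \cite{Szab1996}. Your identification of the exponent $\tfrac34$ as arising from $\sqrt{\delta_j\log(1/\delta_j)}$ with $\delta_j\asymp 2^{-j}\sqrt j$ is the correct bookkeeping, and your caution about not invoking L\'evy's modulus for the limiting $B$ (which would be circular) is well placed. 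The Donsker argument for identifying the law of the limit is also the standard one.

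A couple of minor points you would need to tidy in a full write-up: the description ``$t$ lies in the $k$-th bridge of $B_{j+1}$'' needs to be made precise (the relevant intervals are $[T_{j+1}(k)2^{-2(j+1)},T_{j+1}(k+1)2^{-2(j+1)}]$, and within such an interval $B_{j+1}$ oscillates by at most $2\cdot 2^{-(j+1)}$, not $2^{-j}$---harmless, but worth stating correctly); and the subgaussian bound $\exp(-cx^2 2^{-2m})$ for the time-lag maximum holds only in the moderate-deviation regime, which you should note explicitly covers $x=A\,2^m\sqrt m$. None of this affects the validity of the approach.
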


We mention that the ``twist and shrink'' approximation is asymptotically equivalent to a sequence of Skorohod's embedding of simple, symmetric random walks into Brownian motion, see \cite{Szab1996}.

\section{Summary of the Black--Scholes--Merton model} \label{sec:BSM}

Here we summarize the most important facts about the classical Black--Scholes--Merton model (BSM model) of financial mathematics. We do it in order to build a reference frame to which one may compare the results that we show in the next sections by elementary tools. At the same time, our purpose is to obtain these results from our strong discrete approximation as limits.

Let $(\Omega, \mathcal{F}, \mathbb{P})$ denote a complete probability space, with Brownian motion  $(B(t))_{t\ge0}$, $B(0)=0$, and its generated standard filtration $(\mathcal{F}_t)_{t\ge0}$. Let $a(t)$ and $b(t)$ denote two predictable processes: the number of shares (risky asset) and bond units (riskless asset) held at time $t$, respectively. The market value of this portfolio at time $t$ is
\[
V(t) := a(t) S(t) + b(t) \beta(t) ,
\]
where $S(t)$ is the price process of the risky asset, assumed to be a geometric Brownian motion, defined by
\begin{equation}\label{eq:GBM}
dS(t) = \mu S(t) dt + \sigma S(t) dB(t), \quad S(0)=s_0 > 0
\end{equation}
($\mu \in \mathbb{R}, \sigma > 0$) and $\beta(t)$ is the price of the riskless asset, defined by
\[
d\beta(t) = r(t) \beta (t) dt, \quad \beta(0) = \beta_0 > 0.
\]
For simplicity, we assume that the riskless interest rate $r(t) = r > 0$ (constant) and $\beta_0=1$; then $\beta (t) = e^{rt}$. $S(t)$ is the unique solution of the stochastic differential equation (\ref{eq:GBM}):
\begin{equation}\label{eq:S}
S(t) = s_0 \exp\left(\left(\mu - \sigma^2/2 \right)t + \sigma B(t)\right),
\end{equation}
with $\mu \in \mathbb{R}$ and $\sigma > 0$.

It is also assumed that the above portfolio is \emph{self-financing}, that is, the change of the value process comes only from the change in the prices of assets:
\begin{equation}\label{eq:self_finance}
dV(t) = a(t) dS(t) + b(t) d\beta(t).
\end{equation}

It is very important that under the above assumptions, on a fixed time interval $[0,T]]$, one can introduce an equivalent martingale measure (EMM) $\mathbb{Q}$ by
\begin{equation}\label{eq:Q}
\frac{d\mathbb{Q}}{d\mathbb{P}} := \exp\left( \frac{r - \mu}{\sigma} B(T) - \frac12 \left(\frac{r - \mu}{\sigma}\right)^2 T \right).
\end{equation}
Under the probability $\mathbb{Q}$, the discounted price process $S(t)/\beta(t) = e^{-rt} S(t)$ is a martingale, the SDE (\ref{eq:GBM}) is transformed into
\begin{equation}\label{eq:GBMQ}
dS(t) = r S(t) dt + \sigma S(t) dW(t),
\end{equation}
where
\begin{equation}\label{eq:W}
W(t):= B(t) + \frac{\mu - r}{\sigma}t
\end{equation}
is $\mathbb{Q}$-Brownian motion, and
\begin{equation}\label{eq:StQ}
S(t) = s_0 \exp\left(\left(r - \sigma^2/2 \right)t + \sigma W(t)\right).
\end{equation}

Here we discuss pricing claims of the form $g(S(T))$, where $T>0$ denotes the time of maturity of the claim and $g \in C(\mathbb{R}_+)$. For example, in the case of a \emph{European call option}, one is paid $(S(T) - K)_+ = \max(S(t) - K, 0)$ at maturity $T>0$, where $K>0$ is the strike price. So here $g(x) = (x - K)_+$; similarly, in the case of \emph{European put option}, $g(x) = (K - x)_+$. A major question is how much one has to pay for such an option at time $t \in [0, T]$. It follows from the martingale property of $V(t)/\beta(t)$ under the probability $\mathbb{Q}$ that the answer is the $\mathbb{Q}$-expectation of the discounted payment:
\begin{equation}\label{eq:ftx}
f(t,x) := \mathbb{E}_{\mathbb{Q}}\left(e^{-r(T-t)} g(S(T)) \mid S(t) = x \right)
\end{equation}
($0 \le t \le T, x > 0$).
In the case of European options, there is a \emph{put-call parity}:
\begin{equation}\label{eq:parity}
C(t,x) - P(t,x) = x - e^{-r(T-t)}K ,
\end{equation}
where $C$ and $P$ denote price of the call and put options, respectively.

It follows from (\ref{eq:StQ}) that $S(t)$ has lognormal distribution, with each parameter, except possibly the volatility $\sigma$, known. Thus one obtains an explicit expression for the price of a European call option:
\begin{equation}\label{eq:price}
C(t,x) = x \Phi(d_+(T-t,x)) -e^{-r(T-t)} K \Phi(d_-(T-t,x)),
\end{equation}
where
\begin{equation}\label{eq:price_d}
d_{\pm}(u,x) := \frac{1}{\sigma \sqrt{u}} \left(\log\left(\frac{x}{K}\right) + \left(r \pm \frac{\sigma^2}{2} \right) u \right),
\end{equation}
$0 \le t < T$, and $\Phi$ is the standard normal distribution function. The volatility $\sigma$ of a given risky asset is usually estimated from its history.

By the \emph{Feynman--Kac formula}, under suitable conditions, $f(t,x)$ defined by (\ref{eq:ftx}) uniquely solves a second order partial differential equation (PDE), the famous \emph{Black--Scholes equation}:
\begin{equation}\label{eq:BS}
\partial_t f(t,x) + r x \partial_x f(t,x) + \frac12 \sigma^2 x^2 \partial_{xx} f(t,x)  - r f(t,x) = 0 ,
\end{equation}
with the boundary condition $f(T,x) = g(x)$. A sufficient condition is that $g \in C^2_c(\mathbb{R}_+)$ (That is, $g$ is twice continuously differentiable with compact support.) Though $g(x) = (x - K)_+$ is not differentiable at the point $x = K$, however formulas (\ref{eq:price}) and (\ref{eq:price_d}) imply that $C(t,x) \in C^{1,2}([0, T) \times (0, \infty)$, it solves (\ref{eq:BS}), and $\lim_{t \to T^-} C(t,x) = (x -K)_+$.

It is also important to determine a \emph{replicating portfolio} that can guarantee that the seller of a call option can always have a portfolio of the same value as the price of the option (\emph{there is no arbitrage}):
\begin{equation}\label{eq:value}
V(t) = a(t) S(t) + b(t) \beta(t) = f(t, S(t)).
\end{equation}
Then, supposing $f \in C^{1,2}(\mathbb{R}_+ \times \mathbb{R}$, applying It\^o's formula to $f(t, S(t))$, and comparing the terms with the corresponding terms of the self-financing condition (\ref{eq:self_finance}), one obtains the replicating portfolio as
\begin{equation}\label{eq:at}
a(t) = \partial_x f(t, S(t))
\end{equation}
and
\begin{multline}\label{eq:bt}
b(t) = \frac{1}{r}e^{-rt} \left\{\partial_t f(t, S(t)) + \frac12 \sigma^2 S^2(t) \partial_{xx} f(t, S(t))\right\}\\
= e^{-rt} \left\{ f(t, S(t)) -  S(t) \partial_x f(t, S(t)) \right\} .
\end{multline}
At the second equality above we used the Black--Scholes equation (\ref{eq:BS}). The partial derivatives appearing in the above formulas are called greeks, because they usually are denoted by Greek letters.
In the case of European call option, these lead to the formulas
\begin{equation}\label{eq:eur_port}
a(t) = \Phi(d_+(T-t, S(t))) , \quad b(t) = -K e^{-rT} \Phi(d_-(T-t, S(t))),
\end{equation}
in accordance with (\ref{eq:price}) and (\ref{eq:value}).

\section{A strong discrete approximation of the BSM model} \label{sec:DBSM}

In the sequel we use the following notations when $m=0, 1, 2, \dots$ is (temporarily) fixed: $\Delta t = 2^{-2m}$, $\Delta x = 2^{-m}$, and $t_k = k \Delta t$ $(k=0,1,2,\dots)$. Beside the filtrated probability space  $(\Omega, \mathcal{F}, (\mathcal{F}_t)_{t \ge 0}, \mathbb{P})$ and Brownian motion $(B(t))_{t \ge 0}$ of the previous section we also use the ``twist and shrink'' random walk $B_m(t_k)$ of Section \ref{sec:Pre} and the discrete time filtration $(\mathcal{F}^m_{t_k})_{k \ge 0}$ generated by it, that is $\mathcal{F}^m_{t_k}$ is the smallest $\sigma$-algebra containing all events defined by $\{B_m(t_j) : j=1, \dots, k\}$.

Let $a_m(t_k)$ and $b_m(t_k)$ denote two \emph{predictable} processes: the number of shares (risky asset), respectively, the bond units (riskless asset) held at time $t_k$, more precisely, over the time period $[t_k, t_{k+1})$. Thus they should be measurable w.r.t. $\mathcal{F}^m_{t_{k}}$ for each $k \ge 1$. The market value of this portfolio at time $t_k$ is
\begin{equation}\label{eq:Vdef}
V_m(t_k) := a_m(t_k) S_m(t_k) + b_m(t_k) \beta_m(t_k) ,
\end{equation}
where $S_m(t_k)$ is the price process of the risky asset and $\beta_m(t_k)$ is the price of the riskless asset.

An important special case that we are going to use unless otherwise is stated is when the functions $\mu$ and $\sigma$ are linear:
\begin{equation}\label{eq:linear}
\Delta S_m(t_{k+1}) = \mu S_m(t_k) \Delta t + \sigma S_m(t_k) \Delta B_m(t_{k+1}), \quad S_m(t_0) = s_0 > 0 ,
\end{equation}
$\mu \in \mathbb{R}$ and $\sigma > 0$ (with a small abuse of notation). This defines a simple recursion
\begin{equation}\label{eq:linear1}
S_m(t_{k+1}) = S_m(t_k) \left\{ 1 + \mu \Delta t + \sigma \Delta B_m(t_{k+1}) \right\} \quad (k \ge 0)
\end{equation}
in terms of a symmetric coin-tossing sequence
\begin{equation}\label{eq:Xmtk}
X_m(t_{k+1}) := 2^m \Delta B_m(t_{k+1}) = \pm 1 .
\end{equation}

This time discrete process corresponds to the geometric Brownian motion (\ref{eq:GBM}) and so it has a simple approximate solution.
\begin{lem} \label{le:disc_appr}
Denoting
\[
\widetilde{S}_m(t) := s_0 \exp\left(\left(\mu - \frac{\sigma^2}{2}\right)t + \sigma B_m(t)\right) \quad (t \ge 0),
\]
we obtain for any $m \ge 0$ that
\[
\sup_{0 \le t_k \le T} \left|S_m(t_k) - \widetilde{S}_m(t_k) \right| \le c_1 2^{-m},
\]
where $S_m$ is the solution of (\ref{eq:linear1}) and $c_1 = c_1(\mu, \sigma, T) \in\mathbb{R}_+$.
\end{lem}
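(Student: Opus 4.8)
The plan is to pass to the \emph{multiplicative} form of the recursion and compare it factor by factor with the multiplicative increments of $\widetilde{S}_m$. Set
\[
A_j := 1 + \mu \Delta t + \sigma \Delta B_m(t_j), \qquad \widetilde{A}_j := \exp\bigl((\mu - \sigma^2/2)\Delta t + \sigma \Delta B_m(t_j)\bigr).
\]
Iterating (\ref{eq:linear1}) gives $S_m(t_k) = s_0 \prod_{j=1}^k A_j$, while the definition of $\widetilde S_m$ together with $B_m(0)=0$ and $B_m(t_k)=\sum_{j=1}^k \Delta B_m(t_j)$ gives $\widetilde{S}_m(t_k) = s_0 \prod_{j=1}^k \widetilde{A}_j$. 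Since $\widetilde A_j$ and $\widetilde S_m(t_k)$ are strictly positive, the ratio $R_k := S_m(t_k)/\widetilde{S}_m(t_k) = \prod_{j=1}^k (A_j/\widetilde{A}_j)$ is well defined with $R_0=1$, and it suffices to prove $\sup_{0\le t_k\le T}|R_k - 1| \le c\,2^{-m}$ and then multiply back by $\widetilde S_m(t_k)$.

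The heart of the matter is a \emph{third-order} per-step estimate. Here I would use the exact identity $\bigl(\Delta B_m(t_j)\bigr)^2 = (2^{-m})^2 = \Delta t$, valid because $X_m(t_j) = \pm 1$. Writing $u := (\mu-\sigma^2/2)\Delta t + \sigma \Delta B_m(t_j)$, one has $|u|\le C_0\,2^{-m}$ with $C_0=C_0(\mu,\sigma)$ (using $\Delta t\le\Delta x=2^{-m}$ for $m\ge0$), so a second-order Taylor expansion of the exponential gives $\widetilde A_j = 1 + u + \tfrac12 u^2 + O(2^{-3m})$; the term $\tfrac12\sigma^2\bigl(\Delta B_m(t_j)\bigr)^2=\tfrac12\sigma^2\Delta t$ extracted from $\tfrac12 u^2$ cancels the drift correction $-\tfrac12\sigma^2\Delta t$ in $u$, the remaining pieces of $\tfrac12 u^2$ are $O(2^{-3m})$, and hence $\widetilde A_j = 1 + \mu\Delta t + \sigma\Delta B_m(t_j) + O(2^{-3m}) = A_j + O(2^{-3m})$. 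Thus $|A_j - \widetilde A_j|\le c\,2^{-3m}$ with $c=c(\mu,\sigma)$, and since $\widetilde A_j = e^u \ge e^{-C_0}$ is bounded away from $0$, also $|\delta_j| := |A_j/\widetilde A_j - 1| \le c'\,2^{-3m}$, $c'=c'(\mu,\sigma)$, for all $m\ge0$.

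To conclude I would use the elementary bound $\bigl|\prod_{j=1}^k(1+\delta_j)-1\bigr|\le\prod_{j=1}^k(1+|\delta_j|)-1\le e^{\sum_{j=1}^k|\delta_j|}-1$ (proved by induction on $k$, so signs of the $\delta_j$ are irrelevant). The number of factors with $t_k\le T$ is at most $T\,2^{2m}$, whence $\sum_{j=1}^k|\delta_j|\le c' T\,2^{2m}\cdot2^{-3m}=c'T\,2^{-m}$, and therefore $|R_k-1|\le e^{c'T2^{-m}}-1\le c''\,2^{-m}$ for all $m\ge0$, with $c''=c''(\mu,\sigma,T)$ (using $e^x-1\le xe^x$ and $x\le c'T$). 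Multiplying by $\widetilde S_m(t_k)$ yields
\[
\sup_{0\le t_k\le T}\bigl|S_m(t_k)-\widetilde S_m(t_k)\bigr|\le c''\,2^{-m}\sup_{0\le t\le T}\widetilde S_m(t).
\]

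The point to watch — and the reason a naive additive Gronwall recursion for $S_m(t_k)-\widetilde S_m(t_k)$ is useless here — is that the one-step amplification $|A_j|$ is only $1+O(2^{-m})$, so over $O(2^{2m})$ steps it would compound to $\exp(O(2^{m}))$; the multiplicative (ratio) formulation is exactly what converts the $O(2^{-3m})$ per-step discrepancy into the required $O(2^{-m})$ global error. The remaining subtlety is the factor $\sup_{0\le t\le T}\widetilde S_m(t) = s_0\exp\bigl(\sup_{0\le t\le T}((\mu-\sigma^2/2)t+\sigma B_m(t))\bigr)\le s_0\exp\bigl((|\mu|+\sigma^2/2)T+\sigma\sup_{0\le t\le T}|B_m(t)|\bigr)$: by Theorem~\ref{th:Wiener} the walks $B_m$ converge to $B$ uniformly on $[0,T]$ almost surely, hence $\sup_{m\ge0}\sup_{0\le t\le T}|B_m(t)|$ is a.s. finite, and this bound on $\sup_{0\le t\le T}\widetilde S_m(t)$, uniform in $m$, can be absorbed into $c_1$ (so that, strictly, $c_1$ is an a.s.\ finite random constant depending also on $\omega$).
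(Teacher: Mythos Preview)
Your argument is correct and shares its core with the paper's: both expand the exponential increment to third order and use the exact identity $(\Delta B_m(t_j))^2 = \Delta t$ to see that $\widetilde A_j = A_j + O(2^{-3m})$, i.e.\ the paper's estimate $\bigl|\Delta\widetilde S_m(t_{k+1}) - \widetilde S_m(t_k)(\mu\Delta t + \sigma\Delta B_m(t_{k+1}))\bigr| \le C'_1\,2^{-3m}$. The paper then concludes with the single phrase ``by induction, it implies the statement of the lemma,'' leaving the aggregation over the $O(2^{2m})$ steps to the reader. Your multiplicative ratio argument makes this step explicit and, as you correctly observe, it is not cosmetic: a direct additive recursion on $|S_m(t_k)-\widetilde S_m(t_k)|$ would pick up an amplification $(1+O(2^{-m}))^{T\,2^{2m}} = e^{O(2^{m})}$ and fail, so one must first normalize by $\widetilde S_m$ (equivalently, work with products). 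You are also right about the final constant: the bound necessarily carries the factor $\sup_{[0,T]}\widetilde S_m$, which depends on $\sup_{[0,T]}|B_m|$ and is therefore an a.s.\ finite random quantity rather than a deterministic $c_1(\mu,\sigma,T)$. The paper's proof has the same hidden dependence --- its $C'_1$ tacitly absorbs the factor $\widetilde S_m(t_k)$ --- so your remark is a genuine clarification of the statement, not a defect of your approach.
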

\begin{proof}
Using Taylor expansion of the exponential function, we get
\begin{multline*}
\Delta \widetilde{S}_m(t_{k+1}) := \widetilde{S}_m(t_{k+1}) - \widetilde{S}_m(t_k) \\
= \widetilde{S}_m(t_k) \left\{\exp\left((\mu - \sigma^2/2) \Delta t + \sigma \Delta B_m(t_{k+1})\right) - 1 \right\} \\
= \widetilde{S}_m(t_k) \left\{ (\mu - \sigma^2/2) \Delta t + \sigma \Delta B_m(t_{k+1}) + \frac12 \left( (\mu - \sigma^2/2) \Delta t + \sigma \Delta B_m(t_{k+1}) \right)^2 \right. \\
\left. + \frac{1}{3!} e^t \left( (\mu - \sigma^2/2) \Delta t + \sigma \Delta B_m(t_{k+1}) \right)^3 \right\}, \end{multline*}
where $0 < |t| < (\mu - \sigma^2/2) 2^{-2m} + \sigma 2^{-m}$. Thus
\[
\left| \Delta \widetilde{S}_m(t_{k+1})- \widetilde{S}_m(t_k) \left( \mu \Delta t + \sigma \Delta B_m(t_{k+1}) \right) \right| \le C'_1 2^{-3m} ,
\]
where $C'_1 = C'_1(\mu, \sigma) \in\mathbb{R}_+$. By induction, it implies the statement of the lemma.
\end{proof}

By this lemma and (\ref{eq:Twist_n_Shrink}) it is also clear that
\begin{equation}\label{eq:Smconv}
\sup_{0 \le t \le T} \left|S_m(t) - S(t) \right| = O\left(m^{\frac34} 2^{-\frac{m}{2}}\right) \qquad \text{a.s.}
\end{equation}
for any $T > 0$, where $S$ is defined by (\ref{eq:S}) and $S_m$ is linearly interpolated between the time instants $t_k$.

Also, the price $\beta_m(t_k)$ of the riskless asset can be defined as
\[
\Delta \beta_m(t_{k+1}) := \beta_m(t_{k+1}) - \beta_m(t_{k}) := r(t_k) \beta_m(t_k) \Delta t, \quad \beta_m(0) = \beta_0 > 0.
\]
For simplicity, we assume that the riskless interest rate $r_m(t_k) = r > 0$ (constant) and $\beta_0=1$; then \begin{equation}\label{eq:beta_m}
\beta_m(t_k) = (1+r \Delta t)^k \quad \text{and} \quad \beta_m(t) = (1+r \Delta t)^{\lfloor t/\Delta t \rfloor} \quad (t \ge 0).
\end{equation}
Then for any $m \ge 0$,
\begin{equation}\label{eq:betamconv}
\sup_{0 \le t \le T} \left|\beta(t) - \beta_m(t) \right| = \sup_{0 \le t \le T} \left|e^{rt} - (1+r 2^{-2m})^{\lfloor t 2^{2m} \rfloor} \right|\le  c_2 2^{-2m},
\end{equation}
where $c_2 = c_2(r) \in\mathbb{R}_+$.

We assume that the portfolio is \emph{self-financing}, that is, the change of the value process comes only from the change in the prices of assets over any time interval $[t_k, t_{k+1}]$:
\begin{equation}\label{eq:self_fin}
\Delta V_m(t_{k+1}) := V_m(t_{k+1}) - V_m(t_{k}) = a_m(t_{k}) \Delta S_m(t_{k+1}) + b_m(t_{k}) \Delta \beta_m(t_{k+1}).
\end{equation}
Since
\begin{multline*}
\Delta V_m(t_{k+1}) \\
= a_m(t_{k+1}) S_m(t_{k+1}) + b_m(t_{k+1}) \beta_m(t_{k+1}) - \left\{ a_m(t_{k}) S_m(t_{k}) + b_m(t_{k}) \beta_m(t_{k}) \right\} \\
= a_m(t_{k}) \Delta S_m(t_{k+1}) + b_m(t_{k}) \Delta \beta_m(t_{k+1}) \\
+ \Delta a_m(t_{k+1}) S_m(t_{k+1}) + \Delta b_m(t_{k+1}) \beta_m(t_{k+1}),
\end{multline*}
the self-financing condition (\ref{eq:self_fin}) is equivalent to the equality
\begin{equation}\label{eq:self_fin2}
a_m(t_{k+1}) S_m(t_{k+1}) + b_m(t_{k+1}) \beta_m(t_{k+1}) = a_m(t_{k}) S_m(t_{k+1}) + b_m(t_{k}) \beta_m(t_{k}).
\end{equation}
That is, at a new time instant $t_{k+1}$, the new portfolio $(a_m(t_{k+1}), b_m(t_{k+1}))$ must have the same value as the previous one $(a_m(t_{k}), b_m(t_{k}))$.

It is very important that on a fixed time interval $[0, T]$, where $T$ is an integer multiple of $\Delta t = 2^{-2m}$, one can introduce an \emph{equivalent martingale measure} or \emph{risk-neutral probability}  $\mathbb{Q}_m$ \emph{on the time discrete filtration} $(\mathcal{F}^m_{t_k})_{k \ge 0}$. Based on (\ref{eq:beta_m}) and (\ref{eq:linear1}), set
\begin{eqnarray}\label{eq:um_dm}
  r_m &:=& 1 + r \Delta t = 1 + r 2^{-2m}, \nonumber \\
  u_m &:=& 1 + \sigma 2^{-m} +  \mu 2^{-2m}, \nonumber \\
  d_m &:=& 1 - \sigma 2^{-m} +  \mu 2^{-2m} ,
\end{eqnarray}
and the risk neutral probabilities $q_m^+$ of an up-step (tossing head) and $q_m^-$ of a down-step (tossing tail) by
\begin{eqnarray}\label{eq:qmqm}
q_m^+ &=& \frac{r_m - d_m}{u_m - d_m} =  \frac12 + \frac12 \frac{r - \mu}{\sigma} 2^{-m},  \nonumber \\
q_m^- &=& 1 - q_m^+ = \frac{u_m - r_m}{u_m - d_m} =   \frac12 - \frac12 \frac{r - \mu}{\sigma} 2^{-m}.
\end{eqnarray}
From now on we always assume that $0 < q_m^+ < 1$: this certainly holds when $m$ is large enough,
\begin{equation}\label{eq:qm_positive}
m > m_0 := \frac{1}{\log 2} \log\left(\frac{|r-\mu|}{\sigma}\right) .
\end{equation}
The value of the ratio $S_m(t_{k+1})/S_m(t_k)$ is either $u_m$ with risk-neutral probability $q_m^+$, or $d_m$ with risk-neutral probability $q_m^-$. Then define the probability measure $\mathbb{Q}_m$ by how the symmetric $\frac12$-$\frac12$ probabilities change to $q_m^+$-$q_m^-$, that is, with its Radon--Nikodym derivative on the filtration $(\mathcal{F}^m_{t_k})_{k \ge 0}$:
\begin{multline}\label{eq:Qm}
\frac{d \mathbb{Q}_m}{d \mathbb{P}} :=  \left(\frac{q_m^+}{\frac12}\right)^{\#\text{Heads}(T)} \,\left(\frac{q_m^-}{\frac12}\right)^{\#\text{Tails}(T)} \\
= (2 q_m^+)^{\frac12(T 2^{2m} + B_m(T) 2^{m})} \, (2 q_m^-)^{\frac12(T 2^{2m} - B_m(T) 2^{m})} \\
= \exp\left\{\frac{r - \mu}{\sigma} B_m(T) - \frac12 \left(\frac{r - \mu}{\sigma}\right)^2 T + O(2^{-m}) \right\} ,
\end{multline}
where $\#\text{Heads}(T)$ and $\#\text{Tails}(T)$ denote the number of heads (+1's), respectively, tails (-1's) in the sequence (\ref{eq:Xmtk}) as $t_k$ runs from $0$ to $T$. In (\ref{eq:Qm}) we used that
\[
\log(2 q_m^{\pm}) = \log \left(1 \pm \frac{r-\mu}{\sigma} 2^{-m}\right) = \pm \frac{r-\mu}{\sigma} 2^{-m} - \frac12 \left(\frac{r - \mu}{\sigma}\right)^2 2^{-2m} + O(2^{-3m}) .
\]
Compare the last formula of (\ref{eq:Qm}) to (\ref{eq:Q}).

Here we mention some facts about the probability $\mathbb{Q}_m$.
\begin{lem}\label{le:Qmprop}
(a) The process
\[
\Lambda(t_k) := (2 q_m^+)^{\frac12(t_k 2^{2m} + B_m(t_k) 2^{m})} \, (2 q_m^-)^{\frac12(t_k 2^{2m} - B_m(t_k) 2^{m})} \quad (0 \le t_k \le T)
\]
is a positive $\mathbb{P}$-martingale w.r.t. $(\mathcal{F}^m_{t_k})_{k \ge 0}$, with expectation 1.

(b) For the total variation distance between the probabilities $\mathbb{Q}_m$ and $\mathbb{Q}$, we have
\begin{equation}\label{eq:QmQ}
\lim_{m \to \infty}\sup_{A \in \mathcal{F}} |\mathbb{Q}_m(A) - \mathbb{Q}(A)| = 0.
\end{equation}

(c) If a new nearest neighbor random walk is defined as the ``twist and shrink'' random walk $B_m$ plus a suitable drift:
\begin{equation}\label{eq:Wm}
W_m(t_k) := B_m(t_k) + \frac{\mu - r}{\sigma} t_k  ,
\end{equation}
then $W_m(t_k)$ is a $\mathbb{Q}_m$-martingale w.r.t. $(\mathcal{F}^m_{t_k})_{k \ge 0}$.

(d) Extending $W_m$ by linear interpolation to arbitrary $t \in \mathbb{R}_+$, for any $T > 0$ we have
\begin{equation}\label{eq:W_conv}
\sup_{0 \le t \le T} |W(t) - W_m(t)| = O\left(m^{\frac34} 2^{-\frac{m}{2}}\right) \qquad \text{a.s.},
\end{equation}
where $W$ is defined by (\ref{eq:W}).
\end{lem}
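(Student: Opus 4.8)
The plan is to treat the four assertions in turn, the common engine being that the Radon--Nikodym density in (\ref{eq:Qm}) factorizes over the independent signs $X_m(t_j)$ of (\ref{eq:Xmtk}); parts (a), (c), (d) then reduce to short computations, while part~(b) is the one step needing real care. For (a), I would first note that, since $t_k 2^{2m} = k$ and $B_m(t_k) 2^m = \sum_{j=1}^{k} X_m(t_j)$, the quantities $\tfrac12(t_k 2^{2m} + B_m(t_k) 2^m)$ and $\tfrac12(t_k 2^{2m} - B_m(t_k) 2^m)$ are, respectively, the number of $+1$'s and the number of $-1$'s among $X_m(t_1), \dots, X_m(t_k)$, so that
\[
\Lambda(t_k) = \prod_{j=1}^{k} (2 q_m^+)^{(1 + X_m(t_j))/2} \, (2 q_m^-)^{(1 - X_m(t_j))/2}
\]
is a product of independent, strictly positive factors (positivity since $0 < q_m^{\pm} < 1$), each of $\mathbb{P}$-mean $\tfrac12(2 q_m^+) + \tfrac12(2 q_m^-) = 1$. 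Hence $\mathbb{E}_{\mathbb{P}}(\Lambda(t_{k+1}) \mid \mathcal{F}^m_{t_k}) = \Lambda(t_k)$, so $\Lambda$ is a positive $\mathbb{P}$-martingale with $\mathbb{E}_{\mathbb{P}}\Lambda(t_k) = \mathbb{E}_{\mathbb{P}}\Lambda(t_0) = 1$; in particular $\Lambda(T) = d\mathbb{Q}_m/d\mathbb{P}$ is nonnegative with $\mathbb{P}$-integral $1$, so $\mathbb{Q}_m$ (a priori a measure on $\mathcal{F}^m_T$, extended to $\mathcal{F}$ via the same density) is a genuine probability measure.

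For (c), the product form of $\Lambda$ gives $\Lambda(t_{k+1}) = \Lambda(t_k) (2 q_m^+)^{(1 + X_m(t_{k+1}))/2} (2 q_m^-)^{(1 - X_m(t_{k+1}))/2}$, so under $\mathbb{Q}_m$ the signs $X_m(t_j)$ stay independent with $\mathbb{Q}_m(X_m(t_{k+1}) = \pm 1 \mid \mathcal{F}^m_{t_k}) = q_m^{\pm}$. Hence, with $q_m^+ - q_m^- = \tfrac{r - \mu}{\sigma} 2^{-m}$ from (\ref{eq:qmqm}),
\[
\mathbb{E}_{\mathbb{Q}_m}\bigl(W_m(t_{k+1}) - W_m(t_k) \mid \mathcal{F}^m_{t_k}\bigr) = 2^{-m}(q_m^+ - q_m^-) + \tfrac{\mu - r}{\sigma} 2^{-2m} = 0 ,
\]
i.e.\ $W_m$ is a $\mathbb{Q}_m$-martingale. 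For (d), since $t \mapsto \tfrac{\mu - r}{\sigma} t$ is affine it equals its own linear interpolant, hence $W_m(t) = B_m(t) + \tfrac{\mu - r}{\sigma} t$ for every $t \ge 0$; the drift cancels in $W(t) - W_m(t) = B(t) - B_m(t)$, and (\ref{eq:W_conv}) is then exactly (\ref{eq:Twist_n_Shrink}) of Theorem~\ref{th:Wiener}.

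Part~(b) is where the work lies, and I expect it to be the main obstacle. I would reduce the total variation distance to an $L^1(\mathbb{P})$ statement: with $f_m := d\mathbb{Q}_m/d\mathbb{P}$ and $f := d\mathbb{Q}/d\mathbb{P}$ one has $\sup_{A \in \mathcal{F}} |\mathbb{Q}_m(A) - \mathbb{Q}(A)| = \tfrac12 \mathbb{E}_{\mathbb{P}}|f_m - f|$, so it suffices to prove $f_m \to f$ in $L^1(\mathbb{P})$. The crucial point, already visible in (\ref{eq:Qm}), is that the error term there is \emph{deterministic}: expanding $\log(2 q_m^{\pm}) = \pm \tfrac{r - \mu}{\sigma} 2^{-m} - \tfrac12 (\tfrac{r - \mu}{\sigma})^2 2^{-2m} + O(2^{-3m})$ and summing over the $T 2^{2m}$ steps, the remainder accumulates only to $T 2^{2m} \cdot O(2^{-3m}) = O(2^{-m})$, a non-random null sequence $\varepsilon_m$, whence $f_m = e^{\varepsilon_m} \exp\bigl(\tfrac{r - \mu}{\sigma} B_m(T) - \tfrac12 (\tfrac{r - \mu}{\sigma})^2 T\bigr)$. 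Since $B_m(T) \to B(T)$ a.s.\ by Theorem~\ref{th:Wiener} and $\varepsilon_m \to 0$, continuity of $\exp$ gives $f_m \to f$ $\mathbb{P}$-a.s.; and since $\mathbb{E}_{\mathbb{P}}f_m = 1 = \mathbb{E}_{\mathbb{P}}f$ for every $m$ (the former by part~(a)), Scheff\'e's lemma promotes this to $\mathbb{E}_{\mathbb{P}}|f_m - f| \to 0$, which is (\ref{eq:QmQ}). The only bookkeeping subtlety is that $\mathbb{Q}_m$ is originally defined on the discrete $\sigma$-algebra $\mathcal{F}^m_T$; but $\Lambda(T)$ is $\mathcal{F}$-measurable, so the supremum over all $A \in \mathcal{F}$ makes sense and the argument is unaffected.
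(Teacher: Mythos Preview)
Your proof is correct and follows essentially the same route as the paper: part~(a) via the product factorization over the i.i.d.\ signs, part~(b) via Scheff\'e's lemma after a.s.\ convergence of the densities (using that the $O(2^{-m})$ term in (\ref{eq:Qm}) is deterministic), and part~(d) by cancelling the drift and invoking Theorem~\ref{th:Wiener}. The only minor variation is in part~(c): the paper verifies that $\Lambda_m(t_k) W_m(t_k)$ is a $\mathbb{P}$-martingale by direct computation, whereas you first observe that under $\mathbb{Q}_m$ the signs $X_m(t_j)$ are independent with probabilities $q_m^{\pm}$ and then compute the $\mathbb{Q}_m$-conditional expectation of the increment; both are standard and equivalent, and your version is arguably more transparent.
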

\begin{proof}
(a)
\begin{multline*}
\mathbb{E}_{\mathbb{P}}(\Lambda_m(t_{k+1}) \mid  \mathcal{F}^m_{t_k})  \\
= \Lambda_m(t_k) \, \mathbb{E}_{\mathbb{P}}\left((2 q_m^+)^{\frac{1 + X_m(t_{k+1})}{2}} \, (2 q_m^-)^{\frac{1 - X_m(t_{k+1})}{2}}\right) = \Lambda_m(t_k),
\end{multline*}
where $X_m(t_{k+1})$ is defined by (\ref{eq:Xmtk}).

(b) By Scheff\'e's theorem, see e.g. \cite[p. 224]{Bil1968}, it is enough to show that the Radon--Nikodym derivatives $d \mathbb{Q}_m/d \mathbb{P} = \Lambda_m(T)$ converge to $d \mathbb{Q}/d \mathbb{P} = \Lambda(T)$ $\mathbb{P}$-a.s. By (\ref{eq:Qm}),
\[
\frac{d \mathbb{Q}_m}{d \mathbb{P}} = \exp\left\{\frac{r - \mu}{\sigma} B_m(T) - \frac12 \left(\frac{r - \mu}{\sigma}\right)^2 T + O(2^{-m}) \right\} .
\]
Hence (\ref{eq:Twist_n_Shrink}) and (\ref{eq:Q}) imply the statement.

(c) Clearly, it is enough to show that $\Lambda_m(t_k) W_m(t_k)$ is a $\mathbb{P}$-martingale w.r.t. $(\mathcal{F}^m_{t_k})_{k \ge 0}$:
\begin{multline*}
\mathbb{E}_{\mathbb{P}}(\Lambda_m(t_{k+1}) W_m(t_{k+1}) \mid \mathcal{F}^m_{t_k}) \\
= \mathbb{E}_{\mathbb{P}}\left\{\left(W_m(t_k) + X_m(t_{k+1}) 2^{-m} + \frac{\mu-r}{\sigma} 2^{-2m}\right) \right. \\
\times \left. \left. \Lambda_m(t_k) (2 q_m^+)^{\frac{1 + X_m(t_{k+1})}{2}} \, (2 q_m^-)^{\frac{1 - X_m(t_{k+1})}{2}} \right| \mathcal{F}^m_{t_k} \right\} \\
= \Lambda_m(t_k) W_m(t_k) + \Lambda_m(t_k) \\
\times \mathbb{E}_{\mathbb{P}}\left\{\left(X_m(t_{k+1}) 2^{-m} + \frac{\mu-r}{\sigma} 2^{-2m}\right) (2 q_m^+)^{\frac{1 + X_m(t_{k+1})}{2}} \, (2 q_m^-)^{\frac{1 - X_m(t_{k+1})}{2}} \right\} \\
= \Lambda_m(t_k) W_m(t_k) .
\end{multline*}

(d) This statement follows from formulas (\ref{eq:Twist_n_Shrink}), (\ref{eq:W}), and (\ref{eq:Wm}).
\end{proof}

Thus, considering the measure $\mathbb{Q}_m$, the difference equation (\ref{eq:linear}) for the price process  of the risky assets can be transformed into
\begin{equation}\label{eq:Qmlinear}
\Delta S_m(t_{k+1}) = r S_m(t_k) \Delta t + \sigma S_m(t_k) \Delta W_m(t_{k+1}) .
\end{equation}

\begin{lem} \label{le:disc_mart}
The discounted price process $r_m^{-k} S_m(t_k)$ and discounted value process $r_m^{-k} V_m(t_k)$ with a self-financing portfolio $(a_m(t_k), b_m(t_k))$ are $\mathbb{Q}_m$-martingales.
\end{lem}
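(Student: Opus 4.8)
The plan is to verify the one-step conditional (discrete martingale) identity in each case, leaning on the predictability of the portfolio and on the way the risk-neutral probabilities $q_m^{\pm}$ were chosen. A preliminary remark: over the finite horizon $[0,T]$ there are only $T2^{2m}$ time steps, and each step multiplies $S_m$ by a factor bounded above and below by positive constants, so $S_m(t_k)$, $V_m(t_k)$, and all products below are bounded random variables; since $\mathbb{Q}_m$ is equivalent to $\mathbb{P}$, integrability under $\mathbb{Q}_m$ is automatic and I will not mention it again.

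First I would treat the discounted price process. Because the factorized Radon--Nikodym derivative (\ref{eq:Qm}) (cf.\ Lemma \ref{le:Qmprop}(a)) makes the coin tosses $X_m(t_{k+1})$ of (\ref{eq:Xmtk}) independent of $\mathcal{F}^m_{t_k}$ under $\mathbb{Q}_m$ with $\mathbb{Q}_m\{X_m(t_{k+1})=+1\}=q_m^+$, the ratio $S_m(t_{k+1})/S_m(t_k)$ equals $u_m$ with $\mathbb{Q}_m$-conditional probability $q_m^+$ and $d_m$ with probability $q_m^-$. Using that $S_m(t_k)$ is $\mathcal{F}^m_{t_k}$-measurable together with the identity $q_m^+ u_m + q_m^- d_m = r_m$, which is immediate from $q_m^+ = (r_m-d_m)/(u_m-d_m)$ in (\ref{eq:qmqm}), I get
\[
\mathbb{E}_{\mathbb{Q}_m}\bigl(S_m(t_{k+1}) \mid \mathcal{F}^m_{t_k}\bigr) = r_m\, S_m(t_k),
\]
equivalently $\mathbb{E}_{\mathbb{Q}_m}\bigl(r_m^{-(k+1)}S_m(t_{k+1}) \mid \mathcal{F}^m_{t_k}\bigr) = r_m^{-k}S_m(t_k)$. (Alternatively one reads this off (\ref{eq:Qmlinear}) and Lemma \ref{le:Qmprop}(c), since $\mathbb{E}_{\mathbb{Q}_m}(\Delta W_m(t_{k+1})\mid\mathcal{F}^m_{t_k})=0$.)

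For the discounted value process, recall $\beta_m(t_k)=r_m^k$ by (\ref{eq:beta_m}), so $\Delta\beta_m(t_{k+1})=(r_m-1)\beta_m(t_k)$. Rearranging the self-financing condition (\ref{eq:self_fin}) gives $V_m(t_{k+1}) = V_m(t_k) + a_m(t_k)\,\Delta S_m(t_{k+1}) + b_m(t_k)\,\Delta\beta_m(t_{k+1})$, where $V_m(t_k), a_m(t_k), b_m(t_k)$ are $\mathcal{F}^m_{t_k}$-measurable by predictability. Taking $\mathbb{E}_{\mathbb{Q}_m}(\,\cdot\mid\mathcal{F}^m_{t_k})$, pulling out the measurable factors, and inserting $\mathbb{E}_{\mathbb{Q}_m}(\Delta S_m(t_{k+1})\mid\mathcal{F}^m_{t_k})=(r_m-1)S_m(t_k)$ from the previous step together with $\Delta\beta_m(t_{k+1})=(r_m-1)\beta_m(t_k)$, I obtain
\[
\mathbb{E}_{\mathbb{Q}_m}\bigl(V_m(t_{k+1}) \mid \mathcal{F}^m_{t_k}\bigr) = V_m(t_k) + (r_m-1)\bigl(a_m(t_k)S_m(t_k)+b_m(t_k)\beta_m(t_k)\bigr) = r_m V_m(t_k),
\]
by (\ref{eq:Vdef}) at the last equality. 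Dividing by $r_m^{k+1}$ yields $\mathbb{E}_{\mathbb{Q}_m}\bigl(r_m^{-(k+1)}V_m(t_{k+1})\mid\mathcal{F}^m_{t_k}\bigr)=r_m^{-k}V_m(t_k)$, which is the claim.

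I do not anticipate a genuine obstacle: this is the standard discrete self-financing/no-arbitrage computation. The only two points needing care are the index bookkeeping between $t_k$ and $t_{k+1}$ for the portfolio weights, which is exactly what predictability resolves, and the algebraic identity $q_m^+u_m+q_m^-d_m=r_m$, which is precisely how $q_m^{\pm}$ were defined; integrability comes for free from boundedness over the finite horizon.
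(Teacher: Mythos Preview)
Your proof is correct and follows essentially the same approach as the paper: both establish the one-step $\mathbb{Q}_m$-martingale identity for $r_m^{-k}S_m(t_k)$ (you via $q_m^+u_m+q_m^-d_m=r_m$, the paper via (\ref{eq:Qmlinear}) and the $\mathbb{Q}_m$-martingale property of $W_m$, which you also mention as an alternative), and then feed this into the self-financing condition together with predictability of $(a_m,b_m)$ to handle $r_m^{-k}V_m(t_k)$. The only cosmetic difference is that the paper uses the equivalent form (\ref{eq:self_fin2}) of self-financing where you use (\ref{eq:self_fin}).
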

\begin{proof}
Using (\ref{eq:Qmlinear}) and the fact that $W_m$ is a $\mathbb{Q}_m$-martingale, one obtains
\begin{equation}\label{eq:Sm_Qm_mart}
\mathbb{E}_{\mathbb{Q}_m} \left( \left. r_m^{-k-1} S_m(t_{k+1}) \right| \mathcal{F}^m_{t_k} \right) =  r_m^{-k} S_m(t_{k}) .
\end{equation}
Then (since $\beta_m(t_k) = r_m^k$), we have
\begin{multline*}
\mathbb{E}_{\mathbb{Q}_m} \left( \left. r_m^{-k-1} V_m(t_{k+1}) \right| \mathcal{F}^m_{t_k} \right) \\
= r_m^{-k-1} \mathbb{E}_{\mathbb{Q}_m} \left( \left. a_m(t_{k+1}) S_m(t_{k+1}) + b_m(t_{k+1}) r_m^{k+1}\right| \mathcal{F}^m_{t_k} \right) \\
= r_m^{-k-1} \mathbb{E}_{\mathbb{Q}_m} \left( \left. a_m(t_{k}) S_m(t_{k+1}) + b_m(t_{k}) r_m^{k+1} \right| \mathcal{F}^m_{t_k} \right) \\
= r_m^{-k} a_m(t_{k}) S_m(t_{k}) + b_m(t_{k})
= r_m^{-k} V_m(t_{k}) .
\end{multline*}
Here we used first the self financing condition (\ref{eq:self_fin2}), then the assumption that the portfolio $(a_m(t_k), b_m(t_k))$ is predictable and equality (\ref{eq:Sm_Qm_mart}).
\end{proof}

Now we are considering the \emph{no-arbitrage} price of a claim $g(S_m(T))$ at maturity $T > 0$, where $g \in C(\mathbb{R})$. For example, we are interested in the European call option, when $g(x) = (x-K)_+$. ``No arbitrage'' means that one cannot make a profit out of nothing, with positive probability, without taking a risk. In the current situation it means that the price $f_m(t_k)$ of an option $g(S_m(T))$ at any moment $t_k \in [0, T]$ should agree with the value $V_m(t_k)$ of a \emph{self-financing portfolio replicating the claim}. The last expression means that the value of the self-financing portfolio at maturity $T$ must be equal to $g(S_m(T))$. (The details about a self-financing portfolio see in Sections \ref{sec:Port} and \ref{sec:Port_Eur} below.)
\begin{lem} \label{le:claim_price}
The arbitrage-free price of an option at time $t_k \in[0, T]$ buying a claim $g(S_m(T))$ with maturity $T = N \Delta t$ is
\[
f_m(t_k) = V_m(t_k) = \mathbb{E}_{\mathbb{Q}_m}\left(\left. r_m^{k-N} g(S_m(T)) \right| \mathcal{F}^m_{t_k} \right) ,
\]
where $V_m(t_k)$ is the value at $t_k$ of a self-financing portfolio replicating the claim.
\end{lem}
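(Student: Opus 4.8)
The plan is to prove the two equalities in Lemma~\ref{le:claim_price} separately, using the martingale machinery already established. First, for the identity $f_m(t_k) = \mathbb{E}_{\mathbb{Q}_m}(r_m^{k-N} g(S_m(T)) \mid \mathcal{F}^m_{t_k})$, I would argue that any self-financing portfolio replicating the claim has, by definition, $V_m(T) = g(S_m(T))$. By Lemma~\ref{le:disc_mart}, the discounted value process $r_m^{-j} V_m(t_j)$ is a $\mathbb{Q}_m$-martingale with respect to $(\mathcal{F}^m_{t_j})_{j \ge 0}$. Applying the martingale property between times $t_k$ and $t_N = T$ gives
\[
r_m^{-k} V_m(t_k) = \mathbb{E}_{\mathbb{Q}_m}\left(\left. r_m^{-N} V_m(T) \right| \mathcal{F}^m_{t_k}\right) = \mathbb{E}_{\mathbb{Q}_m}\left(\left. r_m^{-N} g(S_m(T)) \right| \mathcal{F}^m_{t_k}\right),
\]
and multiplying through by $r_m^k$ yields the claimed formula for $V_m(t_k)$.

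The remaining point is the no-arbitrage argument: that the price $f_m(t_k)$ of the option must coincide with $V_m(t_k)$. Here I would give the standard hedging argument adapted to the discrete setting. Suppose a price $\widehat f_m(t_k) \ne V_m(t_k)$ were charged at some time $t_k$; say $\widehat f_m(t_k) > V_m(t_k)$ (the other case is symmetric). Then one sells the option at price $\widehat f_m(t_k)$, sets up the replicating self-financing portfolio at cost $V_m(t_k)$, and invests the surplus $\widehat f_m(t_k) - V_m(t_k) > 0$ in the riskless asset. At maturity the portfolio is worth exactly $g(S_m(T))$, which cancels the liability from the sold option, leaving the (now grown) positive surplus $r_m^{N-k}(\widehat f_m(t_k) - V_m(t_k)) > 0$ as a riskless profit with no initial investment --- an arbitrage. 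Since this is excluded, $\widehat f_m(t_k) = V_m(t_k)$, so $f_m(t_k) = V_m(t_k)$.

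I would also make a brief remark that such a replicating self-financing portfolio actually exists; this is where the discreteness helps, since the portfolio can be constructed explicitly by backward recursion from $V_m(T) = g(S_m(T))$, using at each step the two possible values of $S_m(t_{j+1})/S_m(t_j)$ (namely $u_m$ and $d_m$) to solve a $2\times 2$ linear system for $(a_m(t_j), b_m(t_j))$, as will be detailed in Sections~\ref{sec:Port} and~\ref{sec:Port_Eur}. For the present lemma, however, existence is only needed to know the asserted price is attainable; the identity itself follows purely from the $\mathbb{Q}_m$-martingale property and the terminal condition.

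The main obstacle, such as it is, lies not in the computation but in stating the no-arbitrage step cleanly: one must be careful that the hedging portfolio is predictable (so that positions at $t_j$ depend only on $\mathcal{F}^m_{t_j}$), that the self-financing condition \eqref{eq:self_fin2} is genuinely used when rolling the portfolio forward, and that ``arbitrage'' is formulated correctly for a finite discrete-time market (zero initial cost, non-negative final value, strictly positive final value with positive probability). Once these conventions are fixed, the two halves of the proof are short and the martingale identity is immediate from Lemma~\ref{le:disc_mart}.
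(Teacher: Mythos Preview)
Your proof is correct and follows essentially the same approach as the paper: the core computation---applying the $\mathbb{Q}_m$-martingale property of $r_m^{-k}V_m(t_k)$ from Lemma~\ref{le:disc_mart} together with the terminal condition $V_m(T)=g(S_m(T))$---is identical. The paper's own proof is terser: it writes only the martingale identity and declares it equivalent to the statement, treating the equality $f_m(t_k)=V_m(t_k)$ as already justified by the preceding discussion of no-arbitrage, whereas you spell out the standard hedging/arbitrage argument and comment on existence of the replicating portfolio.
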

\begin{proof}
By Lemma \ref{le:disc_mart}, $r_m^{-k} V_m(t_k)$ is a $\mathbb{Q}_m$-martingale. Hence
\[
\mathbb{E}_{\mathbb{Q}_m}\left(\left. r_m^{-N} g(S_m(T)) \right| \mathcal{F}^m_{t_k} \right)
= \mathbb{E}_{\mathbb{Q}_m}\left(\left. r_m^{-N} V_m(T) \right| \mathcal{F}^m_{t_k} \right)
= r_m^{-k} V_m(t_k) ,
\]
and this is equivalent to the claim of the lemma.
\end{proof}

Lemma \ref{le:claim_price} leads to an explicit evaluation of claims $g(S_m(T))$ in the strong discrete model. At each step of time $\Delta t$, the current value of the stock price  is multiplied either by $u_m$ (up-step) or by $d_m$ (down-step), with probability $q_m^+$, respectively $q_m^-$, and these steps are independent. Thus by Lemma \ref{le:claim_price}, since $S_m$ is a Markov chain,
\begin{multline} \label{eq:fm_expl}
f_m(t_k, x) := r_m^{k-N} \mathbb{E}_{\mathbb{Q}_m} \left( \left. g(S_m(T)) \right| S_m(t_k) = x \right) \\
= r_m^{k-N} \sum_{i=0}^{N-k} \binom{N-k}{i} (q_m^+)^i (q_m^-)^{N-k-i} g(x u_m^i d_m^{N-k-i}) .
\end{multline}
E.g. for the European call option, where $g(x) = (x-K)_+$, one obtains
\begin{multline} \label{eq:Cm_expl}
C_m(t_k, x) := r_m^{k-N} \mathbb{E}_{\mathbb{Q}_m} \left( \left. (S_m(T) - K)_+ \right| S_m(t_k) = x \right) \\
= r_m^{k-N} \sum_{i=0}^{N-k} \binom{N-k}{i} (q_m^+)^i (q_m^-)^{N-k-i} (x u_m^i d_m^{N-k-i} - K)_+ \\
= r_m^{k-N} \sum_{i=j_{m,k}}^{N-k} \binom{N-k}{i} (q_m^+)^i (q_m^-)^{N-k-i} (x u_m^i d_m^{N-k-i} - K) \\
= x \, \text{Bin}(j_{m,k}; N-k, \tilde{q}_m^+) - r_m^{k-N} K \,\text{Bin}(j_{m,k}; N-k, q_m^+) ,
\end{multline}
where
\begin{equation}\label{eq:jm}
j_{m,k} := \left\lceil \frac{\log(K/x) - (N-k) \log(d_m)}{\log(u_m/d_m)}\right\rceil, \quad  \text{Bin}(j; n, p) := \sum_{i=j}^n \binom{n}{i} p^i (1-p)^{n-i},
\end{equation}
and
\[
\tilde{q}_m^+ := \frac{u_m}{r_m} q_m^+, \quad  \tilde{q}_m^- := 1 -  \tilde{q}_m^+ = \frac{d_m}{r_m} q_m^- .
\]
Not surprisingly, the result agrees with the classical binomial formula obtained by the CRR model \cite{CRR1979}.

One can get a similar formula for the price of a European put option in the discrete model, that is, for
\[
P_m(t_k, x) = r_m^{k-N} \mathbb{E}_{\mathbb{Q}_m} \left( \left. (K - S_m(T))_+ \right| S_m(t_k) = x \right) .
\]
Another way is to use a discrete version of the \emph{put-call parity}, using the simple identity $(z)_+ - (-z)_+ = z$:
\begin{equation}\label{eq:dparity}
C_m(t_k, x) - P_m(t_k, x) =  r_m^{k-N} \mathbb{E}_{\mathbb{Q}_m} \left( \left. S_m(T) - K \right| S_m(t_k) = x \right) = x - r_m^{k-N} K,
\end{equation}
since the discounted price process is a $\mathbb{Q}_m$-martingale by Lemma \ref{le:disc_mart}.

\begin{thm}\label{th:slimit}
Suppose that $g \in C_c(\mathbb{R}_+)$ and $T > 0$. As $m \to \infty$, the price $f_m(t^{(m)}, x)$ of the option $g(S_m(T))$ obtained by the above strong discrete approximation converges to its value $f(t, x)$ obtained by the Black--Scholes model, uniformly for $t \in [0, T]$ and $x > 0$:
\begin{multline*}
\lim_{m \to \infty} f_m(t^{(m)}, x) = \lim_{m \to \infty} \mathbb{E}_{\mathbb{Q}_m}\left( \left. r_m^{\lfloor t2^{2m} \rfloor - \lfloor T2^{2m}\rfloor} g(S_m(T^{(m)})) \right| S_m(t^{(m)}) = x \right ) \\
=  \mathbb{E}_{\mathbb{Q}}\left( \left. e^{-r(T-t)} g(S(T)) \right| S(t) = x \right) = f(t,x),
\end{multline*}
where $t^{(m)} := \lfloor t 2^{2m} \rfloor 2^{-2m}$.
\end{thm}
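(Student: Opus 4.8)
The plan is to identify $f_m(t^{(m)},x)$ as a discount factor times the $\mathbb{Q}_m$-expectation of $g$ of a multiplicative random walk, to turn that walk into the exponential of an \emph{additive} random walk built from $W_m$ via Lemma~\ref{le:disc_appr}, and then to invoke a central limit theorem for the additive walk under $\mathbb{Q}_m$, while controlling the passage to the limit uniformly in $(t,x)$ by a two-regime argument.

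\emph{Reduction.} Write $k=\lfloor t2^{2m}\rfloor$, $N=\lfloor T2^{2m}\rfloor$, so $t^{(m)}=t_k$, $T^{(m)}=t_N$ and $N-k=(T^{(m)}-t^{(m)})2^{2m}$. First, $r_m^{k-N}=(1+r2^{-2m})^{-(N-k)}=e^{-r(T^{(m)}-t^{(m)})}(1+O(2^{-2m}))$ and $T^{(m)}-t^{(m)}\to T-t$ uniformly in $t$, so, $g$ being bounded, this factor may be replaced by $e^{-r(T-t)}$ with error $o(1)$. Next, under $\mathbb{Q}_m$ the post-twist coins $X_m(t_{k+1}),\dots,X_m(t_N)$ of (\ref{eq:Xmtk}) are i.i.d.\ with $\mathbb{Q}_m\{X_m(t_j)=\pm1\}=q_m^\pm$, since the density (\ref{eq:Qm}) factorises over them, and by the Markov property they are independent of $\mathcal F^m_{t_k}$; hence conditioning on $S_m(t^{(m)})=x$ just restarts the recursion (\ref{eq:linear1}) from $x$. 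Lemma~\ref{le:disc_appr} is a pathwise estimate, hence valid under $\mathbb{Q}_m$ too, and (\ref{eq:linear1}) is linear in its initial value, so
\[
S_m(T^{(m)})=x\exp\!\Bigl((\mu-\tfrac{\sigma^2}{2})(T^{(m)}-t^{(m)})+\sigma\bigl(B_m(T^{(m)})-B_m(t^{(m)})\bigr)\Bigr)+x\cdot O(2^{-m}),
\]
and by (\ref{eq:Wm}) the exponent equals $(r-\tfrac{\sigma^2}{2})(T^{(m)}-t^{(m)})+\sigma(W_m(T^{(m)})-W_m(t^{(m)}))$. Since $g$ is uniformly continuous, replacing $S_m(T^{(m)})$ by this exponential inside $\mathbb{E}_{\mathbb{Q}_m}$ costs $o(1)$ uniformly for $x$ in a compact set, and the deterministic part of the exponent may be read with $T-t$ in place of $T^{(m)}-t^{(m)}$. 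On the continuous side, (\ref{eq:StQ}) and the Markov property under $\mathbb{Q}$ give $f(t,x)=e^{-r(T-t)}\mathbb{E}\bigl[g\bigl(x\,e^{(r-\sigma^2/2)(T-t)+\sigma\sqrt{T-t}\,Z}\bigr)\bigr]$ with $Z\sim N(0,1)$. Hence, with $\phi(y):=g\bigl(x\,e^{(r-\sigma^2/2)(T-t)+\sigma y}\bigr)$ (bounded, continuous), the theorem reduces to
\[
\mathbb{E}_{\mathbb{Q}_m}\!\bigl[\phi\bigl(W_m(T^{(m)})-W_m(t^{(m)})\bigr)\bigr]\longrightarrow \mathbb{E}\bigl[\phi(\sqrt{T-t}\,Z)\bigr]
\]
uniformly in $t\in[0,T]$ and in $x$ on compacts.

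\emph{A central limit theorem under $\mathbb{Q}_m$.} Here $W_m(T^{(m)})-W_m(t^{(m)})=\sum_{j=k+1}^{N}Y_j$ with $Y_j=2^{-m}X_m(t_j)+\tfrac{\mu-r}{\sigma}2^{-2m}$ i.i.d.\ under $\mathbb{Q}_m$; by Lemma~\ref{le:Qmprop}(c), $\mathbb{E}_{\mathbb{Q}_m}Y_j=0$, while (\ref{eq:qmqm}) gives $\operatorname{Var}_{\mathbb{Q}_m}Y_j=2^{-2m}\bigl(1-\tfrac{(r-\mu)^2}{\sigma^2}2^{-2m}\bigr)$ and $\mathbb{E}_{\mathbb{Q}_m}|Y_j|^3=O(2^{-3m})$. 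Thus the total variance is $(T^{(m)}-t^{(m)})(1+O(2^{-2m}))\to T-t$ and the Berry--Esseen ratio is $O\bigl((N-k)2^{-3m}/((N-k)2^{-2m})^{3/2}\bigr)=O((N-k)^{-1/2})$. So on the set $\{T-t\ge\delta\}$, where $N-k$ is of order $\delta2^{2m}$, the Berry--Esseen theorem puts $W_m(T^{(m)})-W_m(t^{(m)})$ within $O(2^{-m}\delta^{-1/2})$ in Kolmogorov distance of $N(0,\,T^{(m)}-t^{(m)})$; combined with the convergence of the variance and the uniform continuity of $g$ (which converts the distributional estimate into one on $\phi$-expectations) this gives the required convergence uniformly over $\{T-t\ge\delta\}$ and over $x$ in a compact set.

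\emph{Uniformity, and the main obstacle.} Two regimes remain. For $T-t<\delta$: in both models the terminal price differs from $x$ by a factor $e^{O(\sqrt\delta)}$ off an event of probability $O(\sqrt\delta)$ (Chebyshev, from the variances above and $\operatorname{Var}(\sqrt{T-t}\,Z)=T-t<\delta$), so by boundedness and uniform continuity of $g$ both $f_m(t^{(m)},x)$ and $f(t,x)$ lie within $\omega(C\sqrt\delta)+C\sqrt\delta\,\|g\|_\infty$ of $e^{-r(T-t)}g(x)$, where $\omega$ is the modulus of continuity of $g$; a small $\delta$ makes $|f_m-f|<\varepsilon/2$ there. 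For $x$ outside a large compact subinterval of $\mathbb{R}_+$: since $\operatorname{supp}g$ is compact in $\mathbb{R}_+$, $\phi(y)$ vanishes unless $|y|$ is of order $|\log x|$, an event of exponentially small probability under $N(0,\cdot)$ and, by Hoeffding's inequality for the bounded $Y_j$, under $\mathbb{Q}_m$ as well, so $f_m$ and $f$ are uniformly small there. Fixing first the compact $x$-range, then $\delta$, then $m$ large completes the proof. I expect the analytic core — the i.i.d.\ CLT with elementary moment bookkeeping — to be routine; the genuinely delicate point is the uniformity in $t$ up to the degenerate endpoint $t=T$, where the limit theorem collapses, which is what forces the $\delta$-split rather than a single passage to the limit, and a secondary nuisance is that Lemma~\ref{le:disc_appr} and the $r_m^{k-N}$ estimate carry $x$-dependent constants, so the compact-$x$ reduction must be performed first.
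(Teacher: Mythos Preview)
Your argument is correct, but it follows a genuinely different route from the paper's. The paper's proof is only a few lines: it invokes the almost sure uniform convergence $S_m\to S$ on $[0,T]$ established in (\ref{eq:Smconv}) together with the total variation convergence $\mathbb{Q}_m\to\mathbb{Q}$ from Lemma~\ref{le:Qmprop}(b) (Scheff\'e's theorem). With $g\in C_c(\mathbb{R}_+)$ one has $g(S_m(\cdot))\to g(S(\cdot))$ boundedly a.s., and then $\mathbb{E}_{\mathbb{Q}_m}[g(S_m)]\to\mathbb{E}_{\mathbb{Q}}[g(S)]$ by splitting off $|\mathbb{E}_{\mathbb{Q}_m}-\mathbb{E}_{\mathbb{Q}}|\le\|g\|_\infty\cdot 2\,\mathrm{TV}(\mathbb{Q}_m,\mathbb{Q})$ and using dominated convergence under $\mathbb{Q}$; time-homogeneity and linearity in the initial value handle the conditioning on $S_m(t^{(m)})=x$. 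The discount factor is dealt with by (\ref{eq:betamconv}).

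Your approach instead discards the pathwise coupling entirely: once you condition on $S_m(t^{(m)})=x$ and use the Markov property, you are back to a generic binomial model, and your convergence is driven by Berry--Esseen for the i.i.d.\ $\mathbb{Q}_m$-increments of $W_m$. This is essentially the classical CRR weak-convergence argument, carried out carefully enough to get uniformity. What you gain is self-containment (no need for Lemma~\ref{le:Qmprop}(b) or the ``twist and shrink'' machinery) and explicit rates on $\{T-t\ge\delta\}$; what you lose is brevity and the connection to the paper's theme, since your proof would go through verbatim for \emph{any} binomial scheme with matching up/down factors, whereas the paper's proof genuinely uses the strong approximation. Your $\delta$-split and compact-$x$ reduction are more explicit about uniformity than the paper's proof, which leaves those details to the reader.
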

\begin{proof}
By (\ref{eq:Smconv}), $S_m$ a.s. uniformly converges to $S$ on $[0, T]$.  Since $S$ and $S_m$ are time-homogeneous Markov processes, using Lemma \ref{le:Qmprop}(b) one obtains that
\[
\lim_{m \to \infty} \sup_{t \in [0, T], x > 0} \left|\mathbb{E}^x_{\mathbb{Q}_m}\left(g(S_m(T^{(m)} - t^{(m)}))\right) - \mathbb{E}^x_{\mathbb{Q}}\left(g(S(T-t))\right)\right| = 0.
\]
This and (\ref{eq:betamconv}) prove the theorem.
\end{proof}

Because $(x - K)_+ \in C_c(\mathbb{R}_+)$, the above theorem implies that we have convergence in the case of the European put option, and then, by the put-call parity (\ref{eq:dparity}) we have convergence in the case of the European call option as well:
\[
\lim_{m \to \infty} P_m(t, x) = P(t,x), \quad \lim_{m \to \infty} C_m(t, x) = C(t,x).
\]

Naturally, the convergence of the explicit price formulas above follow from this, but it may be instructive to see how this convergence follows from the DeMoivre--Laplace theorem in an elementary way as well. As $m \to \infty$, the right hand side of (\ref{eq:Cm_expl}) and formulas in (\ref{eq:jm}) tend to the corresponding expressions (\ref{eq:price}) and (\ref{eq:price_d}) of the Black--Scholes theory. For, by (\ref{eq:um_dm}), ignoring the smaller order terms,
\[
\log\left(\frac{u_m}{d_m}\right) = \log\left( \frac{1 + \sigma 2^{-m} + \mu 2^{-2m}}{1 - \sigma 2^{-m} + \mu 2^{-2m}} \right) \sim 2 \sigma 2^{-m},
\]
\[
\log(d_m) = \log(1 - \sigma 2^{-m} + \mu 2^{-2m}) \sim \left((\mu - \sigma^2/2) 2^{-2m} - \sigma 2^{-m} \right),
\]
and standardizing the binomial distributions:
\begin{multline*}
\frac{j_m - N q_m^+}{\sqrt{N q_m^+ \, q_m^-}} \\
\sim  \frac{\log(\frac{K}{s_0}) - N \left((\mu - \frac{\sigma^2}{2}) 2^{-2m} - \sigma 2^{-m} \right) - 2 \sigma 2^{-m} N \left(\frac12 - \frac12 \frac{r - \mu}{\sigma} 2^{-m}\right)}{2 \sigma 2^{-m} \sqrt{N \left(\frac12 - \frac12 \frac{r - \mu}{\sigma} 2^{-m}\right)\left(\frac12 + \frac12 \frac{r - \mu}{\sigma} 2^{-m}\right)}} \\
\sim \frac{\log(\frac{K}{s_0}) - (r - \frac{\sigma^2}{2}) T }{\sigma \sqrt{T}},
\end{multline*}
\begin{multline*}
\frac{j_m - N \tilde{q}_m^+}{\sqrt{N \tilde{q}_m^+ \tilde{q}_m^-}} \\
\sim  \frac{\log(\frac{K}{s_0}) - N \left((\mu - \frac{\sigma^2}{2}) 2^{-2m} - \sigma 2^{-m} \right) -  \sigma 2^{-m} N \left(1 - \frac{r - \mu}{\sigma} 2^{-m}\right) \left(1 + \sigma 2^{-m} \right)}{2 \sigma 2^{-m} \sqrt{N \frac14}} \\
\sim \frac{\log(\frac{K}{s_0}) - T \left((\mu - \frac{\sigma^2}{2}) + r - \mu + \sigma^2\right)}{\sigma \sqrt{T}}
\sim \frac{\log(\frac{K}{s_0}) - (r + \frac{\sigma^2}{2}) T}{\sigma \sqrt{T}}.
\end{multline*}
This way, we recover (\ref{eq:price}) and (\ref{eq:price_d}).

\section{Replicating portfolio} \label{sec:Port}

Here we want to deduce a replicating self-financing portfolio of a claim $g(S_m(T))$, where $g \in C(\mathbb{R})$, in the discrete setting discussed above. For a fixed value of $m$ it agrees with the classical solution and can be found in the literature, see e.g. \cite{Shr2004}. Our goal here is to find approximations when $m$ is large enough and to show that the limits when $m \to \infty$ are the classical Black--Scholes ones.

At the beginning, we fix an $m \ge m_0$, where $m_0$ is defined by (\ref{eq:qm_positive}). Recall that by (\ref{eq:Vdef}), the value of a portfolio at time $t_k$ is
\[
V_m(t_k) = a_m(t_k) S_m(t_k) + b_m(t_k) r_m^{k} .
\]
By the self-financing equality (\ref{eq:self_fin}), at time $t_{k+1}$ its value becomes
\begin{multline}\label{eq:Vmtk1}
V_m(t_{k+1}) = a_m(t_k) S_m(t_{k+1}) + b_m(t_k) r_m^{k+1} \\
= a_m(t_k) S_m(t_{k+1}) + r_m \left( V_m(t_k) - a_m(t_k) S_m(t_k)\right)
\end{multline}
Equivalently,
\begin{equation}\label{eq:value_back}
r_m^{-k} V_m(t_k) = r_m^{-k-1} V_m(t_{k+1}) - a_m(t_k) \left(r_m^{-k-1} S_m(t_{k+1}) -  r_m^{-k} S_m(t_k) \right).
\end{equation}
If $S_m(t_k)$ is given, then -- as we saw in (\ref{eq:um_dm}) and (\ref{eq:qmqm}) -- $S_m(t_{k+1})$ can have two possible values: $u_m S_m(t_k)$ or $d_m S_m(t_k)$, with risk neutral probabilities $q_m^+$ and $q_m^-$, respectively. So imagine two equations replacing $S_m(t_{k+1})$ in (\ref{eq:value_back}) by these two possible values. Then multiply them by their corresponding risk-neutral probabilities and sum. The result -- explicitly denoting the given $x=S_m(t_k)$ -- is
\begin{equation}\label{eq:Vmtk}
V_m(t_k, x) = r_m^{-1} \left(q_m^+ V_m(t_{k+1}, u_m x) + q_m^- V_m(t_{k+1}, d_m x)\right) .
\end{equation}
This is exactly the same as the statement in Lemma \ref{le:disc_mart} that $r_m^{-k} V_m(t_k)$ is a $\mathbb{Q}_m$-martingale.

Equation (\ref{eq:Vmtk}) makes it possible to recursively determine the value of $V_m(t_k, x)$ for any $t_k \in[0, T]$ and $x = S_m(t_k)$. Assuming that $T = N \Delta t$, we start with the boundary value $g(S_m(T))$ of the claim. Then take the time $t_{N-1}$ and with all possible values $x = S_m(t_{N-1})$ set
\[
V_m(t_N, u_m x ) = g(u_m x), \quad V_m(t_N, d_m x) = g(d_m x)
\]
to determine $V_m(t_{N-1}, x)$. Then we proceed backward recursively with time steps $-\Delta t$, until time $0$. By Lemma \ref{le:claim_price} and (\ref{eq:Vmtk}) above, this exactly corresponds to the no arbitrage pricing of the claim, based on the $\mathbb{Q}_m$-martingale property of $r_m^{-k} V_m(t_k)$. In other words, at the end of the recursion we have
\begin{equation}\label{eq:fm_Vm}
f_m(t_k, x) = V_m(t_k, x)
\end{equation}
for each $t_k \in [0, T]$ and each possible value $x = S_m(t_k)$. Still, we have to check that it is possible to define a self-financing replicating portfolio that corresponds to the no arbitrage value process $V_m$ above.

Again, starting with (\ref{eq:Vmtk1}), and considering a possible up-step and down-step, we get a $2 \times 2$ linear system for the portfolio:
\begin{eqnarray*}
a_m(t_k) u_m S_m(t_k) + b_m(t_k) r_m^{k+1} &=& V_m(t_{k+1}, u_m S_m(t_k)) \\
a_m(t_k) d_m S_m(t_k) + b_m(t_k) r_m^{k+1} &=& V_m(t_{k+1}, d_m S_m(t_k)) .
\end{eqnarray*}
Since $d_m  < u_m$, the above system is uniquely solvable:
\begin{equation}\label{eq:amtk}
a_m(t_k) = \frac{\Delta V_m(t_{k+1})}{\Delta S_m(t_{k+1})} := \frac{V_m(t_{k+1}, u_m S_m(t_k)) - V_m(t_{k+1}, d_m S_m(t_k))}{u_m S_m(t_k) - d_m S_m(t_k)},
\end{equation}
and
\begin{equation}\label{eq:bmtk}
b_m(t_k) = r_m^{-k-1} \left(V_m(t_{k+1}, u_m S_m(t_k)) - a_m(t_k) u_m S_m(t_k) \right).
\end{equation}
It is clear from these that the portfolio $(a_m(t_k), b_m(t_k))$, which is used on the time interval $[t_k, t_{k+1})$, is measurable w.r.t. $\mathcal{F}^m_{t_k}$, so is predictable. In fact, $(a_m(t_k), b_m(t_k))$ depends only on the present value $S_m(t_k)$ of the price of the risky asset at time $t_k$. Thus whenever we need a specific value of the portfolio in the case $x=S_m(t_k)$ is given, we are going to use the notation $(a_m(t_k, x), b_m(t_k, x))$. Remember that one can write $f_m$ instead of $V_m$ everywhere in the previous formulas.

We need the following lemma to find derivatives of $f_m = V_m$.
\begin{lem} \label{le:diff_fm}
If $g \in C^\ell(\mathbb{R})$, then $f_m(t_k, x)$ is $\ell$-times continuously differentiable with respect to $x > 0$, for any $t_k \in [0, T]$.
\end{lem}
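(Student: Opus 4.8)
The plan is to use the explicit formula \eqref{eq:fm_expl} for $f_m$ and exploit the fact that backward recursion in \eqref{eq:Vmtk} preserves smoothness in $x$. From \eqref{eq:fm_expl},
\[
f_m(t_k, x) = r_m^{k-N} \sum_{i=0}^{N-k} \binom{N-k}{i} (q_m^+)^i (q_m^-)^{N-k-i}\, g\!\left(x\, u_m^i d_m^{N-k-i}\right),
\]
a finite linear combination of dilated copies of $g$. Since $g \in C^\ell(\mathbb{R})$, each term $x \mapsto g(x u_m^i d_m^{N-k-i})$ is $\ell$-times continuously differentiable on $x > 0$ by the chain rule, with $j$-th derivative $\left(u_m^i d_m^{N-k-i}\right)^{j} g^{(j)}\!\left(x u_m^i d_m^{N-k-i}\right)$. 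A finite sum of $C^\ell$ functions is $C^\ell$, so $f_m(t_k, \cdot) \in C^\ell((0,\infty))$ for every $t_k \in [0, T]$, and one may differentiate \eqref{eq:fm_expl} term by term.

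An alternative (and perhaps more instructive) route, which I would at least sketch, is the backward induction on $k$ from $N$ down to $0$. At $t_N$ we have $f_m(t_N, x) = g(x)$, which is $C^\ell$ by hypothesis. Assuming $f_m(t_{k+1}, \cdot) \in C^\ell((0,\infty))$, the recursion \eqref{eq:Vmtk} gives
\[
f_m(t_k, x) = r_m^{-1}\left( q_m^+\, f_m(t_{k+1}, u_m x) + q_m^-\, f_m(t_{k+1}, d_m x) \right),
\]
and since $x \mapsto f_m(t_{k+1}, u_m x)$ and $x \mapsto f_m(t_{k+1}, d_m x)$ are compositions of a $C^\ell$ function with the linear maps $x \mapsto u_m x$ and $x \mapsto d_m x$ (which send $(0,\infty)$ into $(0,\infty)$, using $u_m > d_m > 0$ for $m$ large), they are again $C^\ell$; hence so is $f_m(t_k, \cdot)$. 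This propagates the claim down to $k = 0$.

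There is essentially no obstacle here: the only point requiring a word of care is that the arguments $x u_m^i d_m^{N-k-i}$ (equivalently $u_m x$, $d_m x$) stay in the domain $(0,\infty)$ where $g$ is assumed $C^\ell$, which holds since $d_m > 0$ once $m$ is large enough (cf. \eqref{eq:um_dm} and the standing assumption $m \ge m_0$). I would also remark that the number of summands $N - k + 1 = T 2^{2m} - k + 1$ is finite for each fixed $m$, so termwise differentiation needs no justification beyond linearity. The statement is stable under $m$, but the bounds on the derivatives (needed later for passing to the limit) are not asserted here and are left to subsequent lemmas.
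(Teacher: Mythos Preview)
Your proof is correct and follows essentially the same approach as the paper: both differentiate the explicit finite sum \eqref{eq:fm_expl} term by term using the chain rule, and your derivative $\left(u_m^i d_m^{N-k-i}\right)^{j} g^{(j)}(x u_m^i d_m^{N-k-i})$ is exactly the paper's $\left(\frac{s_i}{x}\right)^j g^{(j)}(s_i)$ with $s_i = x u_m^i d_m^{N-k-i}$. The backward-induction alternative you sketch is a harmless extra, not present in the paper but equivalent in content.
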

\begin{proof}
Using (\ref{eq:fm_expl}), we see that for any $0 \le j \le \ell$,
\begin{equation}\label{eq:jth_der}
(\partial_x)^j f_m(t_k, x)
= r_m^{k-N} \sum_{i=0}^{N-k} \binom{N-k}{i} (q_m^+)^i \, (q_m^-)^{N-k-i} \left(\frac{s_i}{x}\right)^j g^{(j)}(s_i) ,
\end{equation}
where $s_i := x u_m^i d_m^{N-k-i}$.
\end{proof}

This lemma has an analogue in the continuous case as well.
\begin{lem} \label{le:diff_f}
If $g \in C_c^\ell(\mathbb{R})$, then $f(t_k, x)$ defined by (\ref{eq:ftx}) is $\ell$-times continuously differentiable with respect to $x > 0$, for any $t \in [0, T]$.
\end{lem}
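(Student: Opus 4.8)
The plan is to make the lognormal structure of $S(T)$ under $\mathbb{Q}$ explicit and then differentiate under the integral sign, exactly as in the discrete Lemma~\ref{le:diff_fm}. By the Markov property and (\ref{eq:StQ}), conditionally on $S(t)=x$ one has $S(T)=x\exp\!\big((r-\sigma^2/2)(T-t)+\sigma\sqrt{T-t}\,Z\big)$ with $Z$ standard normal under $\mathbb{Q}$, so
\[
f(t,x)=\frac{e^{-r(T-t)}}{\sqrt{2\pi}}\int_{\mathbb{R}} g\!\big(x\,h_t(z)\big)\,e^{-z^2/2}\di z,\qquad h_t(z):=\exp\!\Big((r-\tfrac{\sigma^2}{2})(T-t)+\sigma\sqrt{T-t}\,z\Big).
\]
For $t=T$ we simply have $f(T,x)=g(x)\in C^\ell$, so it remains to treat $0\le t<T$, where $h_t$ is a smooth positive function of $z$.

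Next I would differentiate this integral $j$ times in $x$ for $1\le j\le\ell$, using $\partial_x^{\,j}\,g(x h_t(z))=h_t(z)^j g^{(j)}(x h_t(z))=(S(T)/x)^j g^{(j)}(S(T))$, to arrive at the continuous analogue of (\ref{eq:jth_der}),
\[
(\partial_x)^j f(t,x)=e^{-r(T-t)}\,\mathbb{E}_{\mathbb{Q}}\!\left(\left.\Big(\tfrac{S(T)}{x}\Big)^{j} g^{(j)}(S(T))\,\right|\,S(t)=x\right).
\]
I would justify the interchange of derivative and integral by the usual dominated-convergence criterion: for each fixed $z$ the integrand $x\mapsto (h_t(z)/x)^j g^{(j)}(x h_t(z))$ is continuous on $(0,\infty)$ because $g^{(j)}$ is continuous, and a local integrable majorant is available. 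This is where the compact support of $g$ enters: if $\operatorname{supp} g\subset[-M,M]$, then $g^{(j)}(x h_t(z))=0$ unless $0<x h_t(z)\le M$, i.e. $h_t(z)\le M/x$; hence on any compact $[\varepsilon,1/\varepsilon]\subset(0,\infty)$ the integrand is dominated by $\|g^{(j)}\|_\infty\,(M/\varepsilon^{2})^{j}\,e^{-z^2/2}/\sqrt{2\pi}$, uniformly in $x$, which is integrable in $z$. Since differentiability and continuity of the derivatives are local in $x$, induction on $j=1,\dots,\ell$ gives $f(t,\cdot)\in C^{\ell}\big((0,\infty)\big)$.

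The only genuine obstacle I anticipate is this domination step, and it dissolves once one notices that the potentially unbounded factor $(h_t(z)/x)^j$ is harmless precisely on the $z$-set where $g^{(j)}(x h_t(z))$ fails to vanish. An alternative route, which also yields $f(t,\cdot)\in C^{\infty}$ for $t<T$, is to set $x=e^{\xi}$: up to the factor $e^{-r(T-t)}$, the map $\xi\mapsto f(t,e^{\xi})$ is the convolution of the bounded function $\xi\mapsto g(e^{\xi})$ with the $C^{\infty}$ Gaussian kernel of variance $\sigma^{2}(T-t)$, and the convolution of a bounded function with a Schwartz kernel is $C^{\infty}$; composing with the smooth diffeomorphism $x\mapsto\log x$ of $(0,\infty)$ then gives the claim.
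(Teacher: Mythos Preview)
Your argument is correct and mirrors the paper's own proof: express $f(t,x)$ as a Gaussian integral via the lognormal law of $S(T)$ under $\mathbb{Q}$ and differentiate under the integral sign, invoking the compact support of $g$ to justify the interchange and arrive at the continuous analogue of~(\ref{eq:jth_der}); you actually supply more detail on the domination step than the paper (which simply asserts it), and the convolution alternative is a pleasant extra. One harmless slip: the integrand of the $j$-th derivative is $h_t(z)^{j}\,g^{(j)}(xh_t(z))$, not $(h_t(z)/x)^{j}\,g^{(j)}(xh_t(z))$, so your majorant can in fact be sharpened to $(M/\varepsilon)^{j}\,\|g^{(j)}\|_\infty\,e^{-z^2/2}/\sqrt{2\pi}$.
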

\begin{proof}
Since $W$ is $\mathbb{Q}$-Brownian motion and $S$ defined by (\ref{eq:StQ}) is Markov, we obtain that
\begin{multline*}
f(t, x) = e^{-r(T-t)} \mathbb{E}^x_{\mathbb{Q}}\left( g(S(T-t)) \right) \\
= e^{-r(T-t)} \int_{-\infty}^{\infty} g\left(x e^{\left(r - \frac{\sigma^2}{2} \right)(T-t) + \sigma y } \right) \frac{1}{\sqrt{2 \pi (T-t)}} e^{-\frac{(y-x)^2}{2(T-t)}} \di y .
\end{multline*}
Since we supposed that $g$ has compact support, the differentiation and the integration can be interchanged. Thus we see that for any $0 \le j \le \ell$,
\[
(\partial_x)^j f(t, x)
= e^{-r(T-t)} \int_{-\infty}^{\infty} \left(\frac{s(y)}{x}\right)^j g^{(j)}\left(s(y)\right) \frac{1}{\sqrt{2 \pi (T-t)}} e^{-\frac{(y-x)^2}{2(T-t)}} \di y
\]
where $s(y) := x e^{\left(r - \frac{\sigma^2}{2} \right)(T-t) + \sigma y } $.
\end{proof}

\begin{thm}\label{th:appr_port}
Suppose that $g \in C^2_c(\mathbb{R}_+)$ and
\begin{equation}\label{eq:Maxgdprime}
M := \sup_{s \ge 0} |g''(s)| < \infty .
\end{equation}

(a) Then there exists a constant $c_3 = c_3(\mu, \sigma, r, T) \in \mathbb{R}_+$, such that for any $m > m_0$, $t_k \in [0, T]$, and any value $x = S_m(t_k)$ we have
\[
\left| a_m(t_k, x) - \partial_x f_m(t_{k+1}, x)\right| \le c_3 M x \, 2^{-m},
\]
\begin{multline*}
\left| b_m(t_k, x) - r_m^{-k-1} \left(f_m(t_{k+1}, u_m x) -  u_m x \, \partial_x f_m(t_{k+1}, x) \right) \right| \\
\le c_3 M (1 + \sigma 2^{-m} +|\mu| 2^{-2m}) x^2 \, 2^{-m}.
\end{multline*}

(b) Moreover, using the notation $t^{(m)} = \lfloor t 2^{2m} \rfloor 2^{-2m}$,
\[
\lim_{m \to \infty} \sup_{0\le t \le T} |a_m(t^{(m)}) - a(t)| = 0 , \quad \lim_{m \to \infty} \sup_{0\le t \le T} |b_m(t^{(m)}) - b(t)| = 0 \quad \text{a.s.},
\]
where the portfolio $(a(t), b(t))$ of the Black--Scholes model is defined by (\ref{eq:at}) and (\ref{eq:bt}).
\end{thm}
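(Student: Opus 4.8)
The plan is to prove part (a) by direct estimation starting from the explicit formulas \eqref{eq:amtk} and \eqref{eq:bmtk} for the discrete portfolio, and then deduce part (b) by combining (a) with the already established uniform a.s.\ convergences of $S_m$, $f_m$, and the greeks.

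For part (a), I would begin with the formula for $a_m(t_k,x)$ in \eqref{eq:amtk}. Writing $s_\pm := u_m x$ and $d_m x$, one has
\[
a_m(t_k,x) = \frac{f_m(t_{k+1}, u_m x) - f_m(t_{k+1}, d_m x)}{u_m x - d_m x},
\]
and since $u_m x - d_m x = 2\sigma x 2^{-m}$, this is a symmetric difference quotient of $f_m(t_{k+1},\cdot)$ at $x$ with increment $\sigma x 2^{-m}$ on each side (the common drift term $\mu 2^{-2m} x$ cancels from numerator and denominator up to the right order — one has to be a little careful because $u_m x$ and $d_m x$ are not symmetric about $x$, but $\tfrac12(u_m+d_m)x = x(1+\mu 2^{-2m})$, so the asymmetry is $O(2^{-2m})$). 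By Lemma \ref{le:diff_fm} with $\ell = 2$, $f_m(t_{k+1},\cdot)$ is $C^2$, so a second-order Taylor expansion about $x$ gives that the difference quotient equals $\partial_x f_m(t_{k+1},x)$ plus a remainder controlled by $\sup |\partial_{xx} f_m(t_{k+1},\cdot)|$ times $2^{-m}$. The key point is to bound this second derivative: from \eqref{eq:jth_der} with $j=2$,
\[
\partial_{xx} f_m(t_{k+1},x) = r_m^{k+1-N}\sum_i \binom{N-k-1}{i}(q_m^+)^i (q_m^-)^{N-k-1-i}\left(\frac{s_i}{x}\right)^2 g''(s_i),
\]
whose absolute value is at most $M\, r_m^{k+1-N}\, x^{-2}\,\mathbb{E}_{\mathbb{Q}_m}(S_m(T)^2 \mid S_m(t_{k+1})=x)$; since $r_m^{-1}\mathbb{E}(S_m(t_{j+1})^2\mid S_m(t_j)) = c(m) S_m(t_j)^2$ with $c(m)$ bounded uniformly in $m\ge m_0$ (this is a one-step computation using $q_m^\pm u_m^2 + q_m^\mp d_m^2$), we get $|\partial_{xx} f_m(t_{k+1},x)| \le C M x^0 \cdot$ a bounded factor, i.e.\ $O(M)$ uniformly. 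Hence $|a_m(t_k,x) - \partial_x f_m(t_{k+1},x)| \le c_3 M x 2^{-m}$. For $b_m$, substitute the just-obtained estimate for $a_m$ into \eqref{eq:bmtk}: $b_m(t_k,x) = r_m^{-k-1}(f_m(t_{k+1},u_m x) - a_m(t_k,x) u_m x)$, and replacing $a_m(t_k,x)$ by $\partial_x f_m(t_{k+1},x)$ costs at most $r_m^{-k-1} u_m x \cdot c_3 M x 2^{-m}$, which is the claimed bound after absorbing $u_m = 1+\sigma 2^{-m}+\mu 2^{-2m}$.

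For part (b), I would proceed in three steps. First, $|a_m(t^{(m)}) - \partial_x f_m(t^{(m)}_{+},S_m(t^{(m)}))| \le c_3 M S_m(t^{(m)}) 2^{-m} \to 0$ a.s.\ uniformly in $t$, by part (a) and the a.s.\ boundedness of $\sup_{[0,T]} S_m$ (which follows from \eqref{eq:Smconv}). Here $t^{(m)}_+ := t^{(m)} + 2^{-2m}$. Second, I claim $\partial_x f_m(t^{(m)}_+, S_m(t^{(m)})) \to \partial_x f(t, S(t)) = a(t)$ a.s.\ uniformly: this is where the Feynman--Kac / Markov structure is used. One writes $\partial_x f_m(t_j,x)$ via \eqref{eq:jth_der} as $r_m^{j-N}\mathbb{E}^x_{\mathbb{Q}_m}((S_m(T-t_j)/x)g'(S_m(T-t_j)))$ and compares with the analogous integral representation of $\partial_x f$ from Lemma \ref{le:diff_f}; convergence follows from Lemma \ref{le:Qmprop}(b) (total-variation convergence $\mathbb{Q}_m \to \mathbb{Q}$), the uniform convergence $S_m \to S$, and the uniform continuity of $g'$ with compact support, together with $S_m(t^{(m)}) \to S(t)$, $t^{(m)} \to t$, and $r_m^{\lfloor\cdot\rfloor} \to e^{r\cdot}$ uniformly. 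Third, the same two steps applied to \eqref{eq:bmtk} — using $b_m(t_k,x) = r_m^{-k-1}(f_m(t_{k+1},u_m x) - a_m(t_k,x)u_m x)$, the convergence $f_m \to f$ from Theorem \ref{th:slimit}, the convergence of $a_m$ just proved, and $u_m \to 1$ — yield $b_m(t^{(m)}) \to e^{-rt}(f(t,S(t)) - S(t)\partial_x f(t,S(t))) = b(t)$ by the second form of \eqref{eq:bt}.

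The main obstacle I expect is the uniform (in $t$ and in $\omega$) control in step two of part (b): one must establish that $\partial_x f_m \to \partial_x f$ uniformly on the relevant range of arguments, not just pointwise. This requires packaging the representation of $\partial_x f_m$ as a $\mathbb{Q}_m$-expectation of a bounded, uniformly continuous functional of $S_m$, and then invoking Lemma \ref{le:Qmprop}(b) plus \eqref{eq:Smconv} carefully — in particular handling the time-shift $T - t^{(m)}_+$ versus $T-t$ and the fact that $x = S_m(t^{(m)})$ is itself random and $m$-dependent. The bound on $\sup|\partial_{xx}f_m|$ from part (a) is also useful here, since it gives equicontinuity of the family $\{\partial_x f_m(t_j,\cdot)\}$ and lets one pass from pointwise to uniform convergence in $x$ via an Arzelà--Ascoli-type argument.
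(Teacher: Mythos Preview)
Your proposal is correct and follows essentially the same route as the paper. The only cosmetic difference is in part~(a): where you Taylor-expand and bound $|\partial_{xx}f_m|$ via the $j=2$ case of \eqref{eq:jth_der}, the paper instead applies the mean value theorem to write $a_m(t_k,x)=\partial_x f_m(t_{k+1},\xi)$ for some $\xi\in(d_mx,u_mx)$ and then bounds $|\partial_x f_m(t_{k+1},\xi)-\partial_x f_m(t_{k+1},x)|$ using the $j=1$ case of \eqref{eq:jth_der} together with the Lipschitz bound $|g'(s)-g'(s')|\le M|s-s'|$; both approaches reduce to the same one-step computation $q_m^+u_m^2/r_m+q_m^-d_m^2/r_m=1+(\sigma^2+r)2^{-2m}+O(2^{-3m})$, and part~(b) in both is exactly the decomposition you describe.
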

\begin{proof}
(a) Formulas (\ref{eq:fm_Vm}) and (\ref{eq:amtk}) show that
\[
a_m(t_k, x) = \frac{f_m(t_{k+1}, u_m x) - f_m(t_{k+1}, d_m x)}{(u_m - d_m) x},
\]
where $x$ is a possible value of $S_m(t_k)$. The mean value theorem gives that $a_m(t_k, x) = \partial_x f_m(t_k, \xi)$ for some point $\xi \in (d_m x, u_m x)$. By (\ref{eq:um_dm}), $u_m - d_m = 2 \sigma 2^{-m}$ and $(u_m + d_m)/2 = 1 + \mu 2^{-2m}$; thus
\begin{multline*}
|\xi - x| \le |\xi - x(u_m + d_m)/2| + |x(u_m + d_m)/2 - x| \le \sigma x \, 2^{-m} + |\mu| x \, 2^{-2m} \\
\le (\sigma + |\mu|2^{-m}) x \,2^{-m} .
\end{multline*}
By Lemma \ref{le:diff_fm},
\begin{multline*}
\left| \partial_x f_m(t_{k+1}, \xi) - \partial_x f_m(t_{k+1}, x)\right| \\
\le r_m^{k+1-N} \sum_{i=0}^{N-k-1} \binom{N-k-1}{i} (q_m^+ u_m)^i \, (q_m^- d_m )^{N-k-1-i} \\
\times \left|g'(\xi u_m^i d_m^{N-k-1-i}) - g'(x u_m^i d_m^{N-k-1-i})\right|
\end{multline*}
The assumption  $g \in C^2_c(\mathbb{R}_+)$ implies that
\[
\left|g'(\xi u_m^i d_m^{N-k-1-i}) - g'(x u_m^i d_m^{N-k-1-i})\right| \le M |\xi - x| u_m^i d_m^{N-k-1-i},
\]
where $M$ is defined by (\ref{eq:Maxgdprime}).

Combining the formulas above, it follows that
\begin{multline*}
\left| \partial_x f_m(t_{k+1}, \xi) - \partial_x f_m(t_{k+1}, x)\right| \\
\le M |\xi - x| \sum_{i=0}^{N-k-1} \binom{N-k-1}{i} \left(\frac{q_m^+ u_m^2}{r_m}\right)^i \, \left(\frac{q_m^- d_m^2}{r_m} \right)^{N-k-1-i} \\
\le (\sigma + |\mu|2^{-m}) M x \, 2^{-m} \left(\frac{q_m^+ u_m^2}{r_m} + \frac{q_m^- d_m^2}{r_m}\right)^{N-k-1}.
\end{multline*}

Now
\begin{multline*}
\frac{q_m^+ u_m^2}{r_m} + \frac{q_m^- d_m^2}{r_m} = \frac{u_m^2 (r_m - d_m) + d_m^2 (u_m - r_m)}{r_m (u_m - d_m)} = u_m + d_m - \frac{u_m d_m}{r_m} \\
= 2(1 + \mu 2^{-2m}) - \frac{(1 + \sigma 2^{-m} + \mu 2^{-2m})(1 - \sigma 2^{-m} + \mu 2^{-2m})}{1 + r 2^{-2m}}\\
= 1 +(\sigma^2 + r)2^{-2m} + O(2^{-3m}).
\end{multline*}
Hence
\begin{multline*}
\left|a_m(t_k, x) - \partial_x f_m(t_{k+1}, x)\right|  = \left| \partial_x f_m(t_{k+1}, \xi) - \partial_x f_m(t_{k+1}, x)\right| \\
\le (\sigma + |\mu|2^{-m}) M x \, 2^{-m} \exp\left((\sigma^2 + r + O(2^{-m})) T\right)
\le c_3 M x \, 2^{-m} .
\end{multline*}

Using this and (\ref{eq:bmtk}), one obtains
\begin{multline*}
\left| b_m(t_k, x) - r_m^{-k-1} \left(f_m(t_{k+1}, u_m x) -  u_m x \, \partial_x f_m(t_{k+1}, x) \right) \right| \\
\le u_m x \left|a_m(t_k, x) -  \partial_x f_m(t_{k+1}, x) \right|
\le c_3 M (1 + \sigma 2^{-m} +|\mu| 2^{-2m}) x^2 \, 2^{-m}.
\end{multline*}
These prove (a).

(b) First, by (\ref{eq:at}),
\begin{multline}\label{eq:ama}
\sup_{0\le t \le T} |a_m(t^{(m)}) - a(t)| \le \sup_{0\le t \le T} \left|a_m(t^{(m)}) - \partial_x f_m(t^{(m)} + 2^{-2m}, S_m(t^{(m)}))\right| \\
+ \sup_{0\le t \le T} \left|\partial_x f_m(t^{(m)} + 2^{-2m}, S_m(t^{(m)})) - \partial_x f(t, S(t))\right| .
\end{multline}
By part (a) of this lemma and by (\ref{eq:Smconv}), the first term on the right hand side converges to $0$ almost surely as $m \to \infty$. (For any $\omega \in \Omega$ fixed, the function $t \mapsto S(\omega, t)$ has compact range over $[0, T]$.)

By Lemma \ref{le:diff_fm} and the Markov property of $S_m$, we have
\begin{multline}\label{eq:dfmSm}
\partial_x f_m(t^{(m)} + 2^{-2m}, x) = r_m^{\lfloor t 2^{2m}\rfloor + 1 - \lfloor T 2^{2m} \rfloor} \\
\times \mathbb{E}^x_{\mathbb{Q}_m} \left\{\frac{S_m(T^{(m)} - t^{(m)} - 2^{-2m})}{x} g'(S_m(T^{(m)} - t^{(m)} - 2^{-2m}))\right\} .
\end{multline}
By (\ref{eq:StQ}) we can get a similar formula for $\partial_x f(t, x)$ as well:
\begin{multline}\label{eq:dfS}
\partial_x f(t, x) =  e^{r(t-T)} \partial_x \mathbb{E}^x_{\mathbb{Q}} \left\{g(S(T-t))\right\} \\
= e^{r(t-T)} \partial_x  \mathbb{E}_{\mathbb{Q}} \left\{g\left(x e^{\left(r - \sigma^2/2 \right)(T-t) + \sigma W(T-t)}\right)\right\} \\
= e^{r(t-T)} \mathbb{E}^x_{\mathbb{Q}} \left\{\frac{S(T-t)}{x} g'(S(T-t))\right\} .
\end{multline}
Then replacing $x$ by $S_m(t^{(m)})$ in (\ref{eq:dfmSm}) and by $S(t)$ in (\ref{eq:dfS}), and using the assumption that $g \in C_c^2(\mathbb{R}_+)$ , Lemma \ref{le:Qmprop}(b), and formulas (\ref{eq:Smconv}) and (\ref{eq:betamconv}), it follows that the second term on the right hand side of (\ref{eq:ama}) converges to $0$. (For any $\omega \in \Omega$ fixed, $\inf_{0 \le t \le T} S(\omega, t) > 0$.)

By (\ref{eq:bt}) and (\ref{eq:bmtk}), the a.s. uniform convergence of $b_m(t^{(m)})$ to $b(t)$ follows in a similar manner. This completes the proof of (b).
\end{proof}

We mention that the convergence of $(\partial_x)^j f_m(t^{(m)}, x)$ to $(\partial_x)^j f(t, x)$  as $m \to \infty$ can be proved similarly for $j > 1$ (when $g \in C^j_c(\mathbb{R}_+)$) as for $j=1$ above.

\section{Replicating portfolio in the case of European options} \label{sec:Port_Eur}

Now we would like to extend Theorem \ref{th:appr_port} to the case of European put and call options.
The differentiability properties of the discrete European options $P_m(t_k, x)$ and $C_m(t_k, x)$ are worse than the ones we used above. The problem is caused by the fact that the functions $(s-K)_+$ and $(K-s)_+$ are not differentiable at the point $s=K$. That is why we are going to introduce smooth approximations. Because of the put-call parity (\ref{eq:dparity}), it is enough to consider the function $g(s) = (K-s)_+$ determining the put option, which has a compact support for $s \in \mathbb{R}_+$: this is the function that we consider from now on.

A standard technique is to use convolution with a smooth function of compact support:
\[
g_n(s) := \int_{-\infty}^{\infty} g(s-u) \, n \, \psi(nu) \di u  \quad (n = 1, 2, \dots),
\]
where
\begin{equation}\label{eq:psi}
\psi(u) := \left\{\begin{array}{ccc}
                    c \exp\left(-\frac{1}{1-u^2}\right) & \text{if} & |u| < 1, \\
                    0 & \text{if} & |u| \ge 1.
                  \end{array}
           \right.
\end{equation}
The constant $c$ is chosen so that $\int_{-\infty}^{\infty} \psi(u) \di u = 1$, $c \approx 2.25228$.  Then
\begin{multline*}
g_n(s) = \int_{\max\{s-K , -\frac{1}{n}\}}^{\frac{1}{n}} (K-s+u) \, n \, \psi(nu) \di u \\
= \left\{\begin{array}{ccc}
K-s & \text{if} & s \le K-\frac{1}{n} , \\
\int_{n(s-K)}^{1} \left(K-s+\frac{v}{n}\right) \psi(v)  \di v &\text{if} & |s-K| < \frac{1}{n}, \\
0 & \text{if} & s \ge K+\frac{1}{n} ,
 \end{array}
\right.
\end{multline*}
$g_n \in C^{\infty}(\mathbb{R})$. Consequently,
\begin{equation}\label{eq:gnprime}
g'_n(s) = \left\{\begin{array}{ccc}
-1 & \text{if} & s \le K-\frac{1}{n} , \\
- \int_{n(s-K)}^{1} \psi(v)  \di v &\text{if} & |s-K| < \frac{1}{n}, \\
0 & \text{if} & s \ge K+\frac{1}{n} , \end{array}
\right.
\end{equation}
and
\begin{equation}\label{eq:gndprime}
g''_n(s) = \left\{\begin{array}{ccc}
n \psi(n(s-K))  &\text{if} & |s-K| < \frac{1}{n}, \\
0 & \text{otherwise.}  &  \end{array}
\right.
\end{equation}

We will need an upper bound for the probability of an event that plays an important role in our smooth approximation.
\begin{lem}\label{le:QmSmK}
For any $T-t_k \ge \delta > 0$, $x > 0$, and $m \ge m_0$, $n \ge 1$ such that $\frac{1}{K} \left(\frac{1}{n} + c_1 2^{-m} \right) \le \frac12$, we have
\[
\mathbb{Q}_m^x\left(\left| S_m(T-t_k) - K \right| \le \frac{1}{n} \right)
\le \frac{c_5}{\sqrt{\delta}} \frac{1}{n} + \frac{c_6}{\sqrt{\delta}} 2^{-m} ,
\]
where $c_1 \in \mathbb{R}_+$ was defined in Lemma \ref{le:disc_appr} and $c_5= c_5(\mu,\sigma,r, T, K)$, $c_6 = c_6(\mu,\sigma,r, T, K)$ are positive constants.
\end{lem}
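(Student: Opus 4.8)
The plan is to reduce the claim to a local--limit--theorem estimate for a binomial distribution. Work throughout under $\mathbb{Q}_m^x$, so that the coin tosses $X_m(t_j)$ are i.i.d.\ with $\mathbb{Q}_m\{X_m(t_j)=+1\}=q_m^+$. Put $N':=(T-t_k)2^{2m}$ (an integer, since $T$ and $t_k$ are multiples of $2^{-2m}$) and let $\xi$ be the number of up-steps among the first $N'$ tosses, so that $\xi$ has a $\mathrm{Bin}(N',q_m^+)$ distribution. Since the dynamics of $S_m$ do not depend on the measure, $S_m(T-t_k)=x\,u_m^{\xi}d_m^{N'-\xi}$ and, in the notation of Lemma~\ref{le:disc_appr} with $s_0$ replaced by $x$, $\widetilde S_m(T-t_k)=x\exp\{(\mu-\sigma^2/2)(T-t_k)+\sigma(2\xi-N')2^{-m}\}$; both are strictly increasing in $\xi$, and $\log\widetilde S_m(T-t_k)$ increases by exactly $2\sigma 2^{-m}$ when $\xi$ increases by $1$.

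\emph{Geometric reduction.} On the event $\{|S_m(T-t_k)-K|\le \frac1n\}$, Lemma~\ref{le:disc_appr} (a pathwise bound, hence valid under $\mathbb{Q}_m^x$) gives $|\widetilde S_m(T-t_k)-K|\le \varepsilon$, where $\varepsilon:=\frac1n+c_1 2^{-m}$, and the hypothesis $\frac1K(\frac1n+c_1 2^{-m})\le\frac12$ yields $K-\varepsilon\ge K/2>0$. Taking logarithms, $\log\widetilde S_m(T-t_k)$ is confined to an interval of length $\log\frac{K+\varepsilon}{K-\varepsilon}\le\frac{2\varepsilon}{K-\varepsilon}\le\frac{4\varepsilon}{K}$; dividing by the per-step increment $2\sigma 2^{-m}$, this forces $\xi$ into a set $\mathcal I$ of at most $\frac{2}{\sigma K}2^m\varepsilon+1$ integers. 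The location of $\mathcal I$ depends on $x$ and $t_k$, but this bound on $|\mathcal I|$ does not.

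\emph{Local bound and conclusion.} By Stirling's formula there is a universal $C$ with
\[
\max_{0\le j\le N'}\mathbb{Q}_m^x\{\xi=j\}=\max_{0\le j\le N'}\binom{N'}{j}(q_m^+)^j(q_m^-)^{N'-j}\le\frac{C}{\sqrt{N'\,q_m^+ q_m^-}} .
\]
Since $q_m^+ q_m^-=\frac14\big(1-(\tfrac{r-\mu}{\sigma})^2 2^{-2m}\big)$ is increasing in $m$ and positive under the standing assumption $0<q_m^+<1$, it is bounded below by a positive constant depending only on $\mu,\sigma$ on the range $m\ge m_0$; together with $N'\ge\delta 2^{2m}$ this gives $\max_j\mathbb{Q}_m^x\{\xi=j\}\le\frac{C'}{\sqrt{\delta}}2^{-m}$ with $C'=C'(\mu,\sigma)$. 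Hence
\[
\mathbb{Q}_m^x\Big(|S_m(T-t_k)-K|\le\tfrac1n\Big)=\mathbb{Q}_m^x(\xi\in\mathcal I)\le|\mathcal I|\,\frac{C'}{\sqrt{\delta}}2^{-m}\le\frac{2C'}{\sigma K\sqrt{\delta}}\,\frac1n+\Big(\frac{2C'c_1}{\sigma K}+C'\Big)\frac{2^{-m}}{\sqrt{\delta}} ,
\]
which is the assertion with $c_5=\frac{2C'}{\sigma K}$ and $c_6=\frac{2C'c_1}{\sigma K}+C'$.

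I expect the main obstacle to be the uniform local bound: one needs the maximal binomial probability controlled by $1/\sqrt{N'}$ uniformly in the lattice point $j$, in $N'$, and --- above all --- in $m$, even though $q_m^+$ drifts toward $\frac12$. This is routine via Stirling once one has checked that $q_m^+ q_m^-$ stays bounded away from $0$ on the admissible range of $m$; alternatively one could quote a local central limit theorem with error term. The remaining ingredients --- the logarithmic manipulation, counting lattice points, and verifying that none of the constants depend on $x$ or $t_k$ --- are elementary.
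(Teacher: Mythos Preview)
Your argument is correct, but it proceeds by a genuinely different route from the paper's. Both proofs open with the same geometric reduction via Lemma~\ref{le:disc_appr}: pass from $S_m$ to $\widetilde S_m$, take logarithms, and translate the event $\{|S_m(T-t_k)-K|\le 1/n\}$ into a short interval for the underlying walk. From that point the paper works with the standardized sum $W_m(T-t_k)/\sqrt{T-t_k}$ and invokes the Berry--Esseen theorem to replace $\mathbb{Q}_m(a^-\le \cdot \le a^+)$ by $\Phi(a^+)-\Phi(a^-)$ with error $O(2^{-m}/\sqrt{\delta})$, then bounds $\Phi(a^+)-\Phi(a^-)\le \phi(u)(a^+-a^-)$ and estimates $a^+-a^-$ via the expansion of $\log\frac{1+x}{1-x}$. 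You instead stay on the lattice: express the event in terms of the binomial count $\xi$, bound the number of admissible integers by $\tfrac{2}{\sigma K}2^m\varepsilon+1$, and control each point mass by the Stirling estimate $\max_j\mathbb{Q}_m^x\{\xi=j\}\le C/\sqrt{N'q_m^+q_m^-}$. Your approach is more elementary (no Berry--Esseen), makes the uniformity in $x$ and $t_k$ completely explicit, and yields the same shape $c_5 n^{-1}/\sqrt{\delta}+c_6 2^{-m}/\sqrt{\delta}$; the paper's route has the mild advantage of cleanly separating the Gaussian contribution from the lattice correction. One small slip: the lower bound on $q_m^+q_m^-$ also depends on $r$, not just $\mu,\sigma$, but since $c_5,c_6$ are permitted to depend on $r$ this is harmless.
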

\begin{proof}
By Lemma \ref{le:disc_appr}, we may approximate $S_m$ by $\widetilde{S}_m$:
\begin{multline}\label{eq:QmSm}
\mathbb{Q}_m^x\left(\left| S_m(T-t_k) - K \right| \le \frac{1}{n} \right)
\le \mathbb{Q}_m^x\left(\left| \widetilde{S}_m(T-t_k) - K \right| \le \frac{1}{n} + c_1 2^{-m} \right) \\
= \mathbb{Q}_m\left(\left| x e^{\sigma W_m(T-t_k)+(r-\sigma^2/2)(T-t_k)} - K \right| \le \frac{1}{n} + c_1 2^{-m} \right) \\
= \mathbb{Q}_m\left(a^- \le \frac{W_m(T-t_k)}{\sqrt{T-t_k}} \le a^+\right),
\end{multline}
where $t_k < T$ and
\begin{equation}\label{eq:apm}
a^{\pm} = \frac{1}{\sigma \sqrt{T-t_k}} \left(\log\left(\frac{K \pm \left(\frac{1}{n} + c_3 2^{-m}\right)}{x}\right) - \left(r-\frac{\sigma^2}{2}\right)(T-t_k)\right).
\end{equation}

Now we want to estimate the difference
\[
\left| \mathbb{Q}_m\left(a^- \le \frac{W_m(T-t_k)}{\sqrt{T-t_k}} \le a^+\right) - \mathbb{Q}\left(a^- \le \frac{W(T-t_k)}{\sqrt{T-t_k}} \le a^+\right) \right|  ,
\]
using the Berry--Esseen theorem. The second probability above is simply $\Phi(a^+) - \Phi(a^-)$, because $W$ is $\mathbb{Q}$-Brownian motion. The first probability above depends on
\begin{multline*}
W_m(T-t_k) = B_m(T-t_k) + \frac{\mu-r}{\sigma} (T-t_k)  \\
= \sum_{i=1}^{N-k} (X_m(i) + \theta_m) 2^{-m} =: \sum_{i=1}^{N-k} \eta_m(i),
\end{multline*}
where $(X_m(i))_{i \ge 1}$ is a sequence of i.i.d. random variables, $\mathbb{Q}_m(X_m(i)=\pm 1) = q_m^{\pm}$, $\theta_m := \frac{\mu-r}{\sigma} 2^{-m}$, and $(\eta_m(i))_{i \ge 1}$ is also a sequence of i.i.d. random variables.

Then for any $m > m_0$ and $T-t_k \ge \delta >0$,
\begin{equation}\label{eq:BerryE}
\left| \mathbb{Q}_m\left(a^- \le \frac{W_m(T-t_k)}{\sqrt{T-t_k}} \le a^+\right) - \left(\Phi(a^+) - \Phi(a^-)\right) \right| \le c_4 2^{-m} ,
\end{equation}
where $c_4 = c_4(\mu, \sigma, r, T) \in \mathbb{R}_+$.

For, it is easy to check that $\mathbb{E}_{\mathbb{Q}_m}(\eta_m(i)) = 0$, $\mathbb{E}_{\mathbb{Q}_m}(|\eta_m(i)|^2) = 2^{-2m} (1 - \theta^2_m)$, and $\mathbb{E}_{\mathbb{Q}_m}(|\eta_m(i)|^3) = 2^{-3m} (1 - \theta^4_m)$. Hence by the Berry--Esseen theorem,
\begin{multline*}
\left| \mathbb{Q}_m\left(a^- \le \frac{W_m(T-t_k)}{\sqrt{T-t_k}} \le a^+\right) - \left(\Phi(a^+) - \Phi(a^-)\right) \right| \\
< \frac{2^{-3m} (1 - \theta^4_m)}{(2^{-2m} (1 - \theta^2_m))^{\frac32} \sqrt{(T-t_k)2^{2m}}} \le c_4 2^{-m}, \end{multline*}
with some positive constant $c_4 = c_4(\mu, \sigma, r, T)$.

Further, using the upper estimate
\[
\log(1+x) - \log(1-x) = 2 \left(x + \frac{x^3}{3} + \frac{x^5}{5} + \cdots \right) \le \frac{8}{3} x \quad \text{if} \quad |x| \le \frac12,
\]
by (\ref{eq:apm}) it follows that
\[
a^+ - a^- \le \frac{8 \left(\frac{1}{n} + c_1 2^{-m} \right)}{3 K \sigma \sqrt{T-t_k}}  \quad \text{if} \quad \frac{1}{K} \left(\frac{1}{n} + c_1 2^{-m} \right) \le \frac12,
\]
and then
\begin{equation}\label{eq:QmWm}
\Phi(a^+) - \Phi(a^-) = \phi(u) (a^+ - a^-) \\
\le \frac{8\left(\frac{1}{n} + c_1 2^{-m}\right)}{3 K \sigma \sqrt{2 \pi \delta}}  ,
\end{equation}
where $u \in (a^-, a^+)$, $T-t_k \ge \delta > 0$. Formulas (\ref{eq:QmSm}), (\ref{eq:BerryE}), and (\ref{eq:QmWm}) imply the statement of the lemma.
\end{proof}

Let us introduce a smooth approximation of the price $P_m(t_k, x)$ of the discrete European put option by
\begin{equation}\label{eq:Pmn}
P_m^{(n)}(t_k, x) := r_m^{k-N} \mathbb{E}^x_{\mathbb{Q}_m} \left\{g_n(S_m(T-t_k)) \right\} ,
\end{equation}
assuming that $T=N \Delta t$. (Here we used the Markov property of $S_m$.)
\begin{lem}\label{le:PmPmn}
For any $T-t_k \ge \delta > 0$, $x > 0$, and $m \ge m_0$, $n \ge 1$ such that $\frac{1}{K} \left(\frac{1}{n} + c_1 2^{-m} \right) \le \frac12$, we have
\[
\left|P_m^{(n)}(t_k, x) - P_m(t_k, x)\right| \le \frac{1}{2n} \left(\frac{c_5}{\sqrt{\delta}} \frac{1}{n} + \frac{c_6}{\sqrt{\delta}} 2^{-m} \right),
\]
where $c_1 \in \mathbb{R}_+$ was defined in Lemma \ref{le:disc_appr} and $c_5, c_6 \in\mathbb{R}_+$ were defined in Lemma \ref{le:QmSmK}.
\end{lem}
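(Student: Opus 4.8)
The plan is to reduce everything to the pointwise difference between $g$ and its mollification $g_n$, and then invoke Lemma \ref{le:QmSmK} to control the probability of the one region where they differ. By the definition (\ref{eq:Pmn}) of $P_m^{(n)}$ and the analogous Markov representation $P_m(t_k,x) = r_m^{k-N}\,\mathbb{E}^x_{\mathbb{Q}_m}\{g(S_m(T-t_k))\}$ of the discrete put price (with $g(s)=(K-s)_+$), we have
\[
P_m^{(n)}(t_k,x) - P_m(t_k,x) = r_m^{k-N}\,\mathbb{E}^x_{\mathbb{Q}_m}\bigl\{(g_n - g)(S_m(T-t_k))\bigr\}.
\]
Since $r_m = 1 + r2^{-2m} > 1$ and $k \le N$, the discount factor satisfies $r_m^{k-N} \le 1$, so it can simply be dropped from the estimate.

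The only real content is an elementary bound on $g_n - g$. From the explicit formulas for $g$ and $g_n$ one reads off immediately that $g_n(s) = g(s)$ whenever $|s-K| \ge \tfrac1n$, so the integrand above is supported on the event $\{|S_m(T-t_k) - K| < \tfrac1n\}$. On that interval I would compute $g_n(s) - g(s)$ directly: writing the convolution as $\int_{-1}^{1}\bigl(g(s - v/n) - g(s)\bigr)\psi(v)\di v$ and using the piecewise-linear form of $g(s) = (K-s)_+$ together with the evenness of $\psi$ and $\int_{-1}^1\psi = 1$, one reduces (splitting at the kink) to an expression of the shape $\tfrac1n\int_{\beta}^{1}(w-\beta)\psi(w)\di w$ for a suitable $\beta \in [0,1)$. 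This quantity is nonnegative and decreasing in $\beta$, hence maximized at $\beta = 0$, which gives $0 \le g_n(s) - g(s) \le \tfrac1n\int_0^1 w\,\psi(w)\di w \le \tfrac1n\int_0^1 \psi(w)\di w = \tfrac{1}{2n}$, the last equality using that $\psi$ is even with total integral $1$.

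Combining the two observations,
\[
\bigl|P_m^{(n)}(t_k,x) - P_m(t_k,x)\bigr| \le \frac{1}{2n}\,\mathbb{Q}_m^x\Bigl(\bigl|S_m(T-t_k) - K\bigr| \le \tfrac1n\Bigr),
\]
and the hypotheses $T - t_k \ge \delta > 0$, $m \ge m_0$, $n \ge 1$, $\tfrac1K(\tfrac1n + c_1 2^{-m}) \le \tfrac12$ are precisely those of Lemma \ref{le:QmSmK}, which bounds the remaining probability by $\tfrac{c_5}{\sqrt\delta}\tfrac1n + \tfrac{c_6}{\sqrt\delta}2^{-m}$. Multiplying gives the claimed inequality. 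I do not expect a genuine obstacle here: the argument is short, and the only place demanding a little care is pinning down the constant $\tfrac{1}{2n}$ in the pointwise estimate for $g_n-g$, which the reduction above handles cleanly without needing the numerical value of the normalizing constant $c$.
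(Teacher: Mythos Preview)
Your proof is correct and follows essentially the same route as the paper's: bound the difference by the expectation of $|g_n-g|$, observe that this vanishes outside $|s-K|<\tfrac1n$ and is at most $\tfrac{1}{2n}$ there, then apply Lemma~\ref{le:QmSmK}. The paper simply asserts the $\tfrac{1}{2n}$ pointwise bound without justification, whereas you supply a clean derivation of it via the reduction to $\tfrac1n\int_\beta^1(w-\beta)\psi(w)\di w$; this extra detail is correct and welcome.
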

\begin{proof}
\begin{multline*}
\left|P_m^{(n)}(t_k, x) - P_m(t_k, x)\right| \le r_m^{k-N} \mathbb{E}^x_{\mathbb{Q}_m} \left\{ \left|g_n(S_m(T-t_k)) - g(S_m(T-t_k))\right| \right\} \\
\le r_m^{k-N} \mathbb{Q}_m \left\{\left| S_m(T-t_k) - K \right| \le \frac{1}{n} \right\} \frac{1}{2n} \le \frac{1}{2n} \left(\frac{c_5}{\sqrt{\delta}} \frac{1}{n} + \frac{c_6}{\sqrt{\delta}} 2^{-m} \right)
\end{multline*}
by Lemma \ref{le:QmSmK}.
\end{proof}

Each $g_n$ is a continuous function with support contained in a common compact interval $[0, K+1]$ for $s \ge 0$ and the sequence $(g_n)$ converges to $g(s)=(K-s)_+$ as $n \to \infty$. Hence taking e.g. $n = m$, similarly to the proof of Theorem \ref{th:slimit}, we obtain that
\begin{multline}\label{eq:Pmconv}
\lim_{m \to \infty} P_m^{(m)}(t^{(m)}, x)
= \lim_{m \to \infty} \mathbb{E}^x_{\mathbb{Q}_m}\left(r_m^{\lfloor t2^{2m} \rfloor - \lfloor T2^{2m}\rfloor} g_{m}(S_m(T^{(m)}- t^{(m)})) \right) \\
=  \mathbb{E}^x_{\mathbb{Q}}\left(e^{-r(T-t)}(K - S(T-t))_+  \right) = P(t,x) ,
\end{multline}
uniformly for $x > 0$ and $t \in [0, T-\delta]$, where $\delta > 0$ is arbitrary.

Consider now the convergence of the first $x$-derivative of the smooth approximation, assuming that $T = N \Delta t$ and defining $g'(K) = -\frac12$.
\begin{lem}\label{le:dPmPmn}
For any $T-t_k \ge \delta > 0$, $x > 0$, and $m \ge m_0$, $n \ge 1$ such that $\frac{1}{K} \left(\frac{1}{n} + c_1 2^{-m} \right) \le \frac12$, we have
\[
| \partial_x P_m^{(n)}(t_k, x) - \partial_x P_m(t_k, x) |
\le  \frac{K+\frac{1}{n}}{2x} \left(\frac{c_5}{\sqrt{\delta}} \frac{1}{n} + \frac{c_6}{\sqrt{\delta}} 2^{-m} \right),
\]
where $c_1 \in \mathbb{R}_+$ was defined in Lemma \ref{le:disc_appr} and $c_5, c_6 \in\mathbb{R}_+$ were defined in Lemma \ref{le:QmSmK}.
\end{lem}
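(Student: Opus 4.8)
The plan is to use the explicit differentiation formula for option prices obtained in Lemma \ref{le:diff_fm}, applied once with $g_n$ and once with $g$ (interpreting $g'(K) = -\tfrac12$ as prescribed), and then estimate the difference termwise using the already established bound on the probability that $S_m(T-t_k)$ lands near $K$. First I would write, by (\ref{eq:jth_der}) with $j=1$ applied to both $g_n$ and $g$,
\[
\partial_x P_m^{(n)}(t_k,x) - \partial_x P_m(t_k,x)
= r_m^{k-N}\,\mathbb{E}^x_{\mathbb{Q}_m}\!\left\{\frac{S_m(T-t_k)}{x}\,\bigl(g_n'(S_m(T-t_k)) - g'(S_m(T-t_k))\bigr)\right\},
\]
which is legitimate since $g_n \to g$ and $g_n', g'$ agree outside the interval $|s-K| < \tfrac1n$. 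The point is that the integrand vanishes unless $|S_m(T-t_k) - K| < \tfrac1n$, and on that event $|g_n'(s) - g'(s)| \le \tfrac12$ by (\ref{eq:gnprime}) (since both $g_n'$ and the value $-\tfrac12$ assigned to $g'(K)$ lie in $[-1,0]$, the difference of $g_n'(s)$ from $g'(s)$ — which equals $-1$ for $s<K$ and $0$ for $s>K$ — is at most $\tfrac12$ in absolute value; more carefully one checks $|g_n'(s)-g'(s)|\le\tfrac12$ throughout $|s-K|<\tfrac1n$, using that $g_n'(K)=-\tfrac12$ by symmetry of $\psi$).

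Next I would bound the factor $S_m(T-t_k)/x$ on that event: since $r_m^{k-N} \le 1$ and on $\{|S_m(T-t_k)-K|<\tfrac1n\}$ we have $S_m(T-t_k) < K + \tfrac1n$, it follows that
\[
\left|\partial_x P_m^{(n)}(t_k,x) - \partial_x P_m(t_k,x)\right|
\le \frac{K+\tfrac1n}{x}\cdot\frac12\cdot\mathbb{Q}_m^x\!\left(\left|S_m(T-t_k)-K\right|\le\tfrac1n\right).
\]
Finally I would invoke Lemma \ref{le:QmSmK} to replace the last probability by $\tfrac{c_5}{\sqrt\delta}\tfrac1n + \tfrac{c_6}{\sqrt\delta}2^{-m}$, which yields exactly the claimed inequality. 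The hypotheses $m \ge m_0$, $n \ge 1$, and $\tfrac1K(\tfrac1n + c_1 2^{-m}) \le \tfrac12$ are precisely those needed to apply Lemma \ref{le:QmSmK}.

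The main obstacle I anticipate is purely bookkeeping around the non-differentiability point: justifying that the termwise differentiation formula (\ref{eq:jth_der}) can be used for the non-smooth $g(s)=(K-s)_+$ with the convention $g'(K)=-\tfrac12$, and pinning down the constant $\tfrac12$ in the bound $|g_n'(s)-g'(s)|\le\tfrac12$. Since $f_m(t_k,x)$ is a finite sum (a binomial-weighted combination of values $g(x u_m^i d_m^{N-k-i})$), it is genuinely differentiable in $x$ wherever none of the finitely many arguments $x u_m^i d_m^{N-k-i}$ equals $K$, and at the exceptional values of $x$ the left and right derivatives differ only through the single term hitting $K$; choosing $g'(K)=-\tfrac12$ is the natural symmetric convention that makes $\partial_x P_m$ well-defined and makes the identity above hold with the $\tfrac12$ factor. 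Everything else is a direct substitution of Lemma \ref{le:QmSmK}, so no further analytic work is required.
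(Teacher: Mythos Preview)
Your proposal is correct and follows essentially the same route as the paper: write the difference of derivatives via Lemma~\ref{le:diff_fm} as $r_m^{k-N}\,\mathbb{E}^x_{\mathbb{Q}_m}\{(S_m(T-t_k)/x)\,|g_n'-g'|\}$, observe the integrand vanishes off $\{|S_m(T-t_k)-K|\le 1/n\}$, bound $|g_n'-g'|\le\tfrac12$ and $S_m(T-t_k)\le K+\tfrac1n$ there, and apply Lemma~\ref{le:QmSmK}. The paper handles the non-differentiability issue more tersely by simply declaring $g'(K)=-\tfrac12$ just before the lemma and then using Lemma~\ref{le:diff_fm} without further comment; your added discussion of why this convention makes the finite-sum derivative formula valid is a reasonable elaboration but not a different argument.
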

\begin{proof}
Using Lemmas \ref{le:diff_fm} and \ref{le:QmSmK}, plus the fact that $S_m$ is Markov, we obtain
\begin{multline*}
| \partial_x P_m^{(n)}(t_k, x) - \partial_x P_m(t_k, x) | \\
\le  r_m^{k-N} \mathbb{E}^x_{\mathbb{Q}_m} \left\{ \frac{S_m(T-t_k)}{x} \left| g'_n(S_m(T-t_k)) - g'(S_m(T-t_k))\right| \right\}\\
\le r_m^{k-N} \mathbb{Q}_m^x\left(|S_m(T-t_k) - K| \le \frac{1}{n} \right)
\max_{ |s - K| \le \frac{1}{n}} \, \frac{s}{2x}  \\
\le \frac{K+\frac{1}{n}}{2x} \, \left(\frac{c_5}{\sqrt{\delta}} \frac{1}{n} + \frac{c_6}{\sqrt{\delta}} 2^{-m} \right).
\end{multline*}
\end{proof}

Again, taking e.g. $n=m$ and using the fact that  $|g'(s)| \le 1$ for any $s \ge 0$, similarly to the proof of Theorem \ref{th:slimit} we see that
\begin{multline}\label{eq:dxPmPconv}
\lim_{m \to \infty} \partial_x P_m(t^{(m)}, x) \\
= \lim_{m \to \infty} r_m^{\lfloor t 2^{2m} \rfloor - \lfloor T 2^{2m} \rfloor} \mathbb{E}^x_{\mathbb{Q}_m}\left(\frac{S_m(T^{(m)} - t^{(m)})}{x} g'(S_m(T^{(m)} - t^{(m)})) \right) \\
= e^{-r(T-t)} \mathbb{E}^x_{\mathbb{Q}}\left(\frac{S(T-t)}{x} g'(S(T-t)) \right) = \partial_x P(t, x),
\end{multline}
converging uniformly for $x \ge \delta$ and $t \in [0, T-\delta]$, where $\delta > 0$ is arbitrary. Here we used Lemma \ref{le:diff_f} as well.

Now we can extend Theorem \ref{th:appr_port} to the case of European put option, consequently, to the case of European call option as well.
\begin{thm}\label{th:appr_port_eur}
Consider the replicating portfolio $(a_m(t_k, x), b_m(t_k, x))$ given by (\ref{eq:amtk}) and (\ref{eq:bmtk}) in the case of European put option, where $g(s) = (K - s)_+ $. Then we can use the smooth approximation (\ref{eq:Pmn}) of the price $P_m(t_k, x)$ of the option to find an approximate replicating portfolio.

(a) For any $T-t_k \ge \delta > 0$, $x > 0$, and $m \ge m_0$, $n=\lceil 2^{2m/3} \rceil$ such that $\frac{1}{K} \left(\frac{1}{n} + c_1 2^{-m} \right) \le \frac12$, with the positive constants $c_1, c_3, c_5, c_6$ defined in Lemma \ref{le:disc_appr}, Theorem \ref{th:appr_port}, and Lemma \ref{le:QmSmK}, respectively, we have
\[
\left| a_m(t_k, x) - \partial_x P_m^{(n)}(t_{k+1}, x)\right| \le \left( 2 c_3 x + \frac{c_5  + c_6}{2 \sigma x \sqrt{\delta}} \right) 2^{-m/3}
\]
and
\begin{multline*}
\left| b_m(t_k, x) - r_m^{-k-1} \left(P_m(t_{k+1}, u_m x) -  u_m x \, \partial_x P_m^{(n)}(t_{k+1}, x) \right) \right| \\
\le (1 + \sigma 2^{-m} + |\mu| 2^{-2m}) \left(2 c_3 x^2 + \frac{c_5  + c_6}{2 \sigma \sqrt{\delta}} \right) 2^{-m/3}.
\end{multline*}

(b) Moreover, using the notation $t^{(m)} = \lfloor t 2^{2m} \rfloor 2^{-2m}$,
\[
\lim_{m \to \infty} \sup_{0\le t \le T} |a_m(t^{(m)}) - a(t)| = 0 , \quad \lim_{m \to \infty} \sup_{0\le t \le T} |b_m(t^{(m)}) - b(t)| = 0 \quad \text{a.s.},
\]
where the portfolio $(a(t), b(t))$ of the Black--Scholes model is defined by (\ref{eq:at}) and (\ref{eq:bt}).
\end{thm}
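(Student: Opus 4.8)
The plan for part~(a) is to insert the smoothed price $P_m^{(n)}$ of (\ref{eq:Pmn}) and its own replicating delta as intermediaries. I would split
\[
a_m(t_k,x) - \partial_x P_m^{(n)}(t_{k+1},x) = \bigl(a_m(t_k,x) - a_m^{(n)}(t_k,x)\bigr) + \bigl(a_m^{(n)}(t_k,x) - \partial_x P_m^{(n)}(t_{k+1},x)\bigr),
\]
where $a_m^{(n)}(t_k,x) := \bigl(P_m^{(n)}(t_{k+1},u_mx) - P_m^{(n)}(t_{k+1},d_mx)\bigr)/\bigl((u_m-d_m)x\bigr)$ is the delta given by (\ref{eq:amtk}) for the smoothed claim $g_n(S_m(T))$. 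The second bracket is exactly the quantity bounded in Theorem~\ref{th:appr_port}(a), applied now with $g_n$ in place of $g$: since $g_n\in C^2_c(\mathbb{R}_+)$ and, by (\ref{eq:gndprime}), $\sup_s|g_n''(s)| = n\,\psi(0) < n$, that theorem bounds it by $c_3\,n\,x\,2^{-m}$. For the first bracket I would use $u_m-d_m = 2\sigma 2^{-m}$ and Lemma~\ref{le:PmPmn} at the two arguments $u_m x$ and $d_m x$ (where the time to maturity is $T-t_{k+1} = T-t_k-2^{-2m}$, absorbed into the constants for $m$ large), obtaining a bound of the form $\frac{1}{2\sigma x}\bigl(\frac{c_5}{\sqrt\delta}\frac{2^m}{n^2} + \frac{c_6}{\sqrt\delta}\frac1n\bigr)$. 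The choice $n=\lceil 2^{2m/3}\rceil$ then balances the two error sources, since $n\,x\,2^{-m}\le 2x\,2^{-m/3}$, $2^m/n^2\le 2^{-m/3}$ and $1/n\le 2^{-m/3}$ (and the constraint $\frac1K(\frac1n+c_1 2^{-m})\le\frac12$ holds for $m$ large); adding the two brackets yields the first displayed inequality of (a). The bound for $b_m$ is then purely algebraic: by (\ref{eq:bmtk}), $b_m(t_k,x) = r_m^{-k-1}\bigl(P_m(t_{k+1},u_mx) - a_m(t_k,x)u_mx\bigr)$, so the quantity in question equals $r_m^{-k-1}u_m x\bigl(\partial_x P_m^{(n)}(t_{k+1},x) - a_m(t_k,x)\bigr)$ and one only needs $r_m^{-k-1}\le 1$ and $u_m\le 1+\sigma 2^{-m}+|\mu|2^{-2m}$.

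For part~(b) I would first fix an arbitrary $\delta>0$ and establish uniform convergence on $[0,T-\delta]$ by chaining three estimates that all vanish: part~(a) with $n=\lceil 2^{2m/3}\rceil$ controls $|a_m(t_k,x)-\partial_x P_m^{(n)}(t_{k+1},x)|$; Lemma~\ref{le:dPmPmn} with the same $n$ controls $|\partial_x P_m^{(n)}(t_{k+1},x)-\partial_x P_m(t_{k+1},x)|$; and (\ref{eq:dxPmPconv}), together with Lemma~\ref{le:diff_f}, gives $\partial_x P_m(t^{(m)}+2^{-2m},x)\to\partial_x P(t,x)$ uniformly for $x\ge\delta$ and $t\in[0,T-\delta]$. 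Evaluating at $x=S_m(t^{(m)})$ and using that $S_m\to S$ a.s.\ uniformly on $[0,T]$ with $\inf_{0\le t\le T}S(\omega,t)>0$, one gets $\sup_{0\le t\le T-\delta}|a_m(t^{(m)})-a(t)|\to 0$ a.s., and the statement for $b_m$ follows from (\ref{eq:bmtk}) and (\ref{eq:bt}) in the same way.

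The remaining task, which I expect to be the real obstacle, is a neighborhood of maturity, after which I let $\delta\downarrow0$: near maturity neither Lemma~\ref{le:dPmPmn} nor the $C^2$-estimates behind Theorem~\ref{th:appr_port} are usable, since their constants blow up like $1/\sqrt\delta$, so one has to exploit the almost sure event $\{S(T)\ne K\}$ to stay away from the discontinuity of the put delta at the strike. Concretely, fix $\omega$ outside a null set with $S(\omega,T)\ne K$; continuity of $t\mapsto S(\omega,t)$ gives $\delta_\omega>0$ and $\varepsilon_\omega>0$ with $|S(\omega,t)-K|\ge\varepsilon_\omega$ on $[T-\delta_\omega,T]$, hence $|S_m(\omega,t^{(m)})-K|\ge\varepsilon_\omega/2$ there for $m$ large. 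On this region I would show both deltas are forced close to the step function $-\mathbf{1}_{\{S<K\}}$: the Black--Scholes put delta is $a(t)=\Phi(d_+(T-t,S(t)))-1$, and since $|\log(S(\omega,t)/K)|$ is bounded below and $T-t\le\delta$, the argument $|d_+(T-t,S(t))|$ is large, so $a(t)$ differs from $-\mathbf{1}_{\{S(\omega,t)<K\}}$ by at most some $\rho_1(\delta)\to 0$ as $\delta\downarrow0$, uniformly on $[T-\delta,T]$; the discrete delta $a_m(t_k,x)$, written through the binomial tails of (\ref{eq:Cm_expl}) via the discrete put--call parity (\ref{eq:dparity}) and estimated by the DeMoivre--Laplace theorem exactly as at the end of Section~\ref{sec:DBSM}, differs from $-\mathbf{1}_{\{S_m(\omega,t^{(m)})<K\}}$ by at most $\rho_2(\delta,m)$ with $\limsup_m\rho_2(\delta,m)\to 0$ as $\delta\downarrow0$. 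Since the two indicators coincide on $[T-\delta,T]$ for $m$ large, the triangle inequality gives $\limsup_m\sup_{T-\delta\le t\le T}|a_m(t^{(m)})-a(t)|\le\rho_1(\delta)$; combined with the $[0,T-\delta]$ estimate this bounds $\limsup_m\sup_{0\le t\le T}|a_m(t^{(m)})-a(t)|$ by $\rho_1(\delta)$ for every $\delta\le\delta_\omega$, hence by $0$, and the same argument runs for $b_m$ through (\ref{eq:bmtk}) and (\ref{eq:bt}). Everything else is bookkeeping: aligning the time indices $t_k$, $t_{k+1}$, $t^{(m)}$, keeping track of which $\delta$ is fixed and which tends to $0$, and checking that the constants depend only on $\mu,\sigma,r,T,K$ and, for the near-maturity part, on $\delta$.
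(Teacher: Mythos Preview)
Your proof of part~(a) is essentially identical to the paper's: you introduce the same intermediate $a_m^{(n)}(t_k,x)$, bound $|a_m-a_m^{(n)}|$ via Lemma~\ref{le:PmPmn} at $u_mx$ and $d_mx$ (the paper's proof in fact cites Lemma~\ref{le:dPmPmn} at this step, but the bound invoked is exactly that of Lemma~\ref{le:PmPmn}, so this appears to be a typo there), bound $|a_m^{(n)}-\partial_x P_m^{(n)}|$ by Theorem~\ref{th:appr_port}(a) with $M_n=n\psi(0)<n$, choose $n=\lceil 2^{2m/3}\rceil$ to balance, and obtain the $b_m$-estimate algebraically from (\ref{eq:bmtk}).

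For part~(b) you are considerably more thorough than the paper. The paper's entire proof of (b) reads: ``These statements follow similarly from (a) as Theorem~\ref{th:appr_port}(b) followed from Theorem~\ref{th:appr_port}(a), therefore the details are omitted.'' You rightly notice that this analogy is not automatic: in Theorem~\ref{th:appr_port} the bound from part~(a) is uniform in $t_k$, whereas here the bound carries a factor $1/\sqrt{\delta}$ that blows up as $t_k\to T$, and likewise the convergence (\ref{eq:dxPmPconv}) is stated only on $[0,T-\delta]$. Your chain of estimates on $[0,T-\delta]$ matches what the paper presumably intends, and your near-maturity argument---using the a.s.\ event $\{S(T)\neq K\}$ to localize away from the strike and then comparing both deltas to the step function $-\mathbf{1}_{\{S<K\}}$ via the explicit formulas and De~Moivre--Laplace---is a reasonable way to close a gap that the paper leaves open.
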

\begin{proof}
(a) By (\ref{eq:amtk}),
\[
a_m(t_k, x) = \frac{P_m(t_{k+1}, u_m x - P_m(t_{k+1}, d_m x)}{(u_m - d_m) x} .
\]
Define
\[
a_m^{(n)}(t_k, x) := \frac{P_m^{(n)}(t_{k+1}, u_m x - P_m^{(n)}(t_{k+1}, d_m x)}{(u_m - d_m) x} .
\]
Then
\begin{multline*}
\left|a_m(t_k, x) - a_m^{(n)}(t_k, x) \right| \\
\le \frac{\left|P_m(t_{k+1}, u_m x) - P_m^{(n)}(t_{k+1}, u_m x) \right| + \left|P_m(t_{k+1}, d_m x) - P_m^{(n)}(t_{k+1}, d_m x) \right|}{2 \sigma x 2^{-m}} \\
\le \frac{2^m}{2 \sigma x n} \left(\frac{c_5}{\sqrt{\delta}} \frac{1}{n} + \frac{c_6}{\sqrt{\delta}} 2^{-m} \right) = \frac{1}{2 \sigma x } \left(\frac{c_5}{\sqrt{\delta}} \frac{2^{m}}{n^2} + \frac{c_6}{\sqrt{\delta}} \frac{1}{n} \right) ,
\end{multline*}
by Lemma \ref{le:dPmPmn}.

Now let us extend the method of Theorem \ref{th:appr_port}(b) to $a_m^{(n)}(t_k, x)$. The only difference is that $M_n = \sup_{s \ge 0} |g''_n(s)|$ depends on $n$, unlike (\ref{eq:Maxgdprime}). However, by (\ref{eq:psi}) and (\ref{eq:gndprime}), $M_n = n c/e < n$. Thus we get that
\[
\left| a_m^{(n)}(t_k, x) - \partial_x P_m^{(n)}(t_{k+1}, x)\right| \le c_3 n x \, 2^{-m} ,
\]
where $c_3$ is the same as in Theorem \ref{th:appr_port}(b). In sum,
\[
\left| a_m(t_k, x) - \partial_x P_m^{(n)}(t_{k+1}, x)\right| \le \frac{1}{2 \sigma x } \left(\frac{c_5}{\sqrt{\delta}} \frac{2^{m}}{n^2} + \frac{c_6}{\sqrt{\delta}} \frac{1}{n} \right) + c_3 n x \, 2^{-m} .
\]
Let us choose e.g. $n=\lceil 2^{2m/3} \rceil$ here, then
\[
\left| a_m(t_k, x) - \partial_x P_m^{(n)}(t_{k+1}, x)\right| \le  \left(\frac{c_5  + c_6}{2 \sigma x \sqrt{\delta}}  + 2 c_3 x \right) 2^{-m/3} .
\]

Further, by this result and by equation (\ref{eq:bmtk}),
\begin{multline*}
\left| b_m(t_k, x) - r_m^{-k-1} \left(P_m(t_{k+1}, u_m x) - u_m x \, \partial_x P_m^{(n)}(t_{k+1}, x) \right) \right| \\
\le r_m^{-k-1} u_m x \left| a_m(t_k, x) - \partial_x P_m^{(n)}(t_{k+1}, x)\right| \\
\le (1 + \sigma 2^{-m} + |\mu| 2^{-2m}) \left(2 c_3 x^2 + \frac{c_5  + c_6}{2 \sigma \sqrt{\delta}} \right) 2^{-m/3} .
\end{multline*}
These prove (a).

(b) These statements follow similarly from (a) as Theorem \ref{th:appr_port}(b) followed from Theorem \ref{th:appr_port}(a), therefore the details are omitted.
\end{proof}

Lemmas \ref{le:PmPmn}, \ref{le:dPmPmn} and Theorem \ref{th:appr_port_eur} extend to the case of European call option as well. For, by the discrete put-call parity (\ref{eq:dparity}), one can define
\[
C_m^{(n)}(t_k, x) := P_m^{(n)}(t_k, x) + x - r_m^{k-N} K, \quad \partial_x C_m^{(n)}(t_k, x) = \partial_x P_m^{(n)}(t_k, x) + 1 .
\]
Then the corresponding statements for the call option follow easily.

\section{Discrete Feynman--Kac formulas and a discrete Black--Scholes equation} \label{sec:dFK_BS}

We would like to have a discrete version of the celebrated Black--Scholes equation, and to obtain the latter as a limit of the discrete formula. In that program a useful tool is a \emph{discrete Feynman--Kac formula}. We mention that a similar formula was introduced by Cs\'aki \cite{Csa1993} in 1993. Let us start with a \emph{discrete time-homogeneous It\^o process}:
\begin{equation}\label{eq:dGBM}
\Delta S_m(t_{k+1}) = \mu(S_m(t_k)) \Delta t + \sigma(S_m(t_k)) \Delta B_m(t_{k+1}), \quad S_m(t_0) = s_0 \in \mathbb{R} ,
\end{equation}
where $\Delta S_m(t_{k+1}) := S_m(t_{k+1}) - S_m(t_k)$, $\Delta B_m(t_{k+1}) := B_m(t_{k+1}) - B_m(t_k) = \pm \Delta x$, and $\mu : \mathbb{R} \to \mathbb{R}$, $\sigma : \mathbb{R} \to \mathbb{R_+}$ are given continuous functions. Introduce a ``forward'' version of a \emph{discrete Feynman--Kac functional} for $m > m_0$ fixed:
\begin{equation}\label{eq:dFK1}
f_m(t_k, x) := \mathbb{E}^{x} \left\{\exp\left(-\sum_{i=0}^{k-1} r(S_m(t_i)) \Delta t \right) g(S_m(t_k)) \right\}  \quad (t_k \ge 0, x \in \mathbb{R}),
\end{equation}
where $\mathbb{E}^x(Z) := \mathbb{E}(Z | B(0) = x)$ and $r : \mathbb{R} \to \mathbb{R}$ and $g : \mathbb{R} \to \mathbb{R}$ are given continuous functions as well. Let us use the abbreviation
\begin{equation}\label{eq:spm}
x^{\pm} := x + \mu(x) \Delta t + \sigma(x) (\pm \Delta x) .
\end{equation}

\begin{lem}\label{le:dFK1}
For any $m > m_0$ fixed, the discrete Feynman--Kac functional (\ref{eq:dFK1}) is the unique solution of the difference equation (\emph{a discrete Feynman--Kac formula})
\begin{multline*}
\frac{f_m(t_{k+1},x) - f_m(t_k, x)}{\Delta t} = \frac12 e^{-r(x) \Delta t} \, \frac{f_m(t_k, x^+) + f_m(t_k, x^-) - 2 f_m(t_k, x)}{(\Delta x)^2} \\
+ \frac{e^{-r(x) \Delta t} - 1}{\Delta t} f_m(t_k, x),
\end{multline*}
with the initial condition $f_m(0, x) = g(x)$.
\end{lem}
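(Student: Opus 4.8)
The plan is to obtain the difference equation directly from the probabilistic definition (\ref{eq:dFK1}) by conditioning on the first step of the chain $S_m$, and then to read off uniqueness from the resulting explicit forward recursion. The initial condition is immediate: for $k=0$ the sum $\sum_{i=0}^{k-1}$ in (\ref{eq:dFK1}) is empty, the exponential factor equals $1$, and $f_m(0,x)=\mathbb{E}^x g(S_m(t_0))=g(x)$. Note also that there is no integrability concern at any stage: starting from a fixed state $x$, the variable $S_m(t_k)$ takes at most $2^k$ values, so every expectation below is a finite sum.

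First I would record that $S_m$ defined by (\ref{eq:dGBM}) is a time-homogeneous Markov chain with respect to $(\mathcal{F}^m_{t_k})_{k\ge 0}$: the increment $\Delta B_m(t_{k+1})$ equals $\pm\Delta x$ with probability $\tfrac12$ each, independently of $\mathcal{F}^m_{t_k}$ (it is $2^{-m}$ times an increment of the underlying simple symmetric random walk), so by (\ref{eq:dGBM}) the chain moves from state $x$ to $x^+$ or to $x^-$, each with probability $\tfrac12$, regardless of the past, with $x^{\pm}$ as in (\ref{eq:spm}). Now fix $k\ge 0$ and $x$. In the expectation defining $f_m(t_{k+1},x)$ the first weight term is $r(S_m(t_0))\Delta t=r(x)\Delta t$, which is deterministic and can be pulled out as $e^{-r(x)\Delta t}$. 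Conditioning the remaining expectation on $S_m(t_1)$ and using the Markov property together with time-homogeneity — re-indexing $i\mapsto i-1$, so that $\sum_{i=1}^{k}r(S_m(t_i))\Delta t$ becomes a $k$-term weight over the shifted chain and $g(S_m(t_{k+1}))$ becomes $g$ at its $k$-th step — one sees that, given $S_m(t_1)=y$, the conditional expectation equals $f_m(t_k,y)$. Since $S_m(t_1)\in\{x^+,x^-\}$ with probability $\tfrac12$ each, this yields
\[
f_m(t_{k+1},x)=e^{-r(x)\Delta t}\left(\tfrac12 f_m(t_k,x^+)+\tfrac12 f_m(t_k,x^-)\right).
\]

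It then remains to rewrite this in the stated form and to argue uniqueness. Subtracting $f_m(t_k,x)$ from both sides and splitting $-f_m(t_k,x)=-e^{-r(x)\Delta t}f_m(t_k,x)+(e^{-r(x)\Delta t}-1)f_m(t_k,x)$, then dividing by $\Delta t$ and using $(\Delta x)^2=\Delta t=2^{-2m}$, turns the displayed identity into exactly the difference equation of the lemma. For uniqueness, observe that this difference equation, solved for $f_m(t_{k+1},x)$, is precisely the displayed recursion, which determines the function $x\mapsto f_m(t_{k+1},x)$ from $x\mapsto f_m(t_k,x)$; together with the initial condition $f_m(0,\cdot)=g$, induction on $k$ shows that the solution is unique on $\{t_k:k\ge 0\}\times\mathbb{R}$. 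The only step requiring genuine care is the conditioning-and-reindexing argument identifying the conditional expectation with $f_m(t_k,\cdot)$; this is where the time-homogeneous Markov property of $S_m$ enters, and it is the conceptual core of the discrete Feynman--Kac identity. Everything else is bookkeeping.
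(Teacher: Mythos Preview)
Your proof is correct and follows essentially the same route as the paper: condition on the first step, pull out the deterministic factor $e^{-r(x)\Delta t}$, use the time-homogeneous Markov property of $S_m$ to identify the remaining conditional expectation with $f_m(t_k,\cdot)$, and then rearrange the resulting recursion into the stated difference equation; uniqueness by forward induction is the same. Your write-up is in fact slightly more explicit than the paper's about the algebraic step (the splitting of $-f_m(t_k,x)$ and the use of $(\Delta x)^2=\Delta t$), but the argument is the same.
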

\begin{proof}
Conditioning on the first step of $B_m$ and using the fact that $S_m$ is a time-homogeneous Markov process, one obtains
\begin{multline}\label{eq:steps2}
f_m(t_{k+1}, x) = e^{-r(x) \Delta t} \sum_{\delta = \pm \Delta x} \mathbb{P}(B_m(t_1) = \delta )  \\
\times  \mathbb{E}^{x} \left\{\exp\left(-\sum_{i=0}^{k-1} r(S_m(t_{i+1})) \Delta t \right) g(S_m(t_{k+1})) \mid B_m(t_1) = \delta \right\} \\
= \frac12 e^{-r(x) \Delta t} \sum_{j=\pm} \mathbb{E}^{x^{j}} \left\{\exp\left(-\sum_{i=0}^{k-1} r(S_m(t_{i})) \Delta t \right) g(S_m(t_{k})) \right\} \\
= \frac12 e^{-r(x) \Delta t} \left( f_m(t_k, x^+) + f_m(t_k, x^-) \right) .
\end{multline}
Now subtracting $f_m(t_k, x)$, dividing by $\Delta t$, and arranging the terms on the right hand side, one can get the difference equation for $f_m$ in the statement of the lemma.

The uniqueness follows by induction, starting with the initial condition and proceeding with $\Delta t$ steps in time by formula (\ref{eq:steps2}).
\end{proof}

To apply a discrete Feynman--Kac formula to the strong discrete financial model introduced in the previous section, a ``backward'' version of a discrete Feynman--Kac functional is needed. Such a ``backward'' discrete formula can similarly be shown as the ``forward'' version in Lemma \ref{le:dFK1}, even for explicitly time dependent discrete It\^o processes:
\begin{equation}\label{eq:dGtIP}
\Delta S_m(t_{k+1}) = \mu(t_k, S_m(t_k)) \Delta t + \sigma(t_k, S_m(t_k)) \Delta W_m(t_{k+1}),
\end{equation}
$S_m(t_0) = s_0 \in \mathbb{R}$, where $\mu : \mathbb{R}_+ \times \mathbb{R} \to \mathbb{R}$, $\sigma : \mathbb{R}_+ \times \mathbb{R} \to \mathbb{R_+}$ are given continuous functions, and $W_m$ is a simple random walk, not necessarily symmetric. Let $(\Delta x)^+$ denote an up-step and $(\Delta x)^-$ a down-step of $W_m$, and $p_m^+$ and $p_m^-$ denote their probabilities, respectively.

Consider the following functional when $T=N \Delta t$:
\begin{equation}\label{eq:dFKb}
f_m(t_k, x) := \mathbb{E} \left\{\left. \exp\left(-\sum_{i=k+1}^{N} \rho_m(t_i, S_m(t_i)) \Delta t \right) g(S_m(T)) \right| S_m(t_k) = x \right\}
\end{equation}
$(t_k \ge 0, x \in \mathbb{R})$, where $S_m$ is defined by (\ref{eq:dGtIP}), $\rho_m : \mathbb{R}_+ \times \mathbb{R} \to \mathbb{R}$ and $g : \mathbb{R} \to \mathbb{R}$ are continuous functions. Use the abbreviations
\[
x^{\pm} := x + \mu(t_k, x) \Delta t  \pm  \sigma(t_k, x) (\Delta x)^{\pm} .
\]
\begin{lem}\label{le:dFKback}
For any $m > m_0$ fixed, the discrete Feynman--Kac functional (\ref{eq:dFKb}) is the unique solution of the difference equation
\begin{multline*}
\frac{f_m(t_{k},x) - f_m(t_{k-1}, x)}{\Delta t} \\
+ \frac12 e^{-\rho_m(t_k, x) \Delta t} \, \frac{2 p_m^+ \, f_m(t_{k}, x^+) + 2 p_m^- \, f_m(t_{k}, x^-) -  2f_m(t_{k}, x)}{(\Delta x)^2} \\
+ \frac{e^{-\rho_m(t_k, x) \Delta t} - 1}{\Delta t} f_m(t_{k}, x) = 0,
\end{multline*}
with the boundary condition $f_m(T, x) = g(x)$.
\end{lem}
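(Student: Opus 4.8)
The plan is to run the proof of Lemma~\ref{le:dFK1} backward in time: first establish a one-step recursion for $f_m$ by conditioning on the initial increment of $W_m$, then recast that recursion algebraically as the stated difference equation, read off the boundary condition from (\ref{eq:dFKb}), and obtain uniqueness by downward induction.

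First I would condition the functional (\ref{eq:dFKb}) defining $f_m(t_{k-1},x)$ on the first step of $W_m$ out of $(t_{k-1},x)$: this step carries $S_m$ to $x^{+}$ with probability $p_m^{+}$ and to $x^{-}$ with probability $p_m^{-}$, where $x^{\pm}$ is as in the statement. Since $S_m$ is a Markov chain, conditionally on $S_m(t_k)=x^{\pm}$ the remaining trajectory on $\{t_k,\dots,t_N\}$ is distributed like the chain started at $x^{\pm}$ at time $t_k$; moreover the discount $\exp(-\sum\rho_m(t_i,S_m(t_i))\Delta t)$ peels off the single factor $e^{-\rho_m(t_k,x)\Delta t}$ attached to this first step, which involves only the deterministic data $t_k,x$ and hence is constant with respect to the coin toss. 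This yields the exact analogue of (\ref{eq:steps2}),
\[
f_m(t_{k-1},x)=e^{-\rho_m(t_k,x)\Delta t}\Bigl(p_m^{+}f_m(t_k,x^{+})+p_m^{-}f_m(t_k,x^{-})\Bigr).
\]

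From here the argument is routine. I would subtract $f_m(t_k,x)$, divide by $\Delta t$, use $(\Delta x)^2=\Delta t$, and split $p_m^{+}f_m(t_k,x^{+})+p_m^{-}f_m(t_k,x^{-})=\tfrac12\bigl(2p_m^{+}f_m(t_k,x^{+})+2p_m^{-}f_m(t_k,x^{-})-2f_m(t_k,x)\bigr)+f_m(t_k,x)$; regrouping the leftover $e^{-\rho_m(t_k,x)\Delta t}f_m(t_k,x)-f_m(t_k,x)$ into $\frac{e^{-\rho_m(t_k,x)\Delta t}-1}{\Delta t}f_m(t_k,x)$ reproduces the three summands of the displayed equation. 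For the boundary condition, the sum $\sum_{i=N+1}^{N}$ in (\ref{eq:dFKb}) is empty and $S_m(T)=x$ on $\{S_m(t_N)=x\}$, so $f_m(T,x)=g(x)$. For uniqueness, note the difference equation is equivalent to the one-step recursion above, which determines $f_m(t_{k-1},\cdot)$ from $f_m(t_k,\cdot)$; starting from the prescribed values at $t_N$ and inducting downward on $k$ gives the unique solution.

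I expect the only delicate point to be the bookkeeping in the conditioning step: one has to verify that the discount factor split off with the first increment is exactly $e^{-\rho_m(t_k,x)\Delta t}$ — evaluated at the deterministic pair $(t_k,x)$, so that it does factor out of the expectation over the toss — and that the residual discount, after re-indexing, is precisely the one appearing in the definition of $f_m(t_k,\cdot)$, with the time arguments of $\mu,\sigma,\rho_m$ tracked correctly through the shift $k-1\mapsto k$. Everything else is the same algebraic rearrangement and downward induction as in Lemma~\ref{le:dFK1}.
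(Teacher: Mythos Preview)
Your proposal is correct and follows essentially the same route as the paper: condition on the increment $\Delta W_m(t_k)$ to peel off the factor $e^{-\rho_m(t_k,x)\Delta t}$ and obtain the one-step recursion, then rearrange algebraically using $(\Delta x)^2=\Delta t$, and conclude uniqueness by backward induction from the terminal condition. Your flagging of the discount-factor bookkeeping as the only delicate point is apt and matches what the paper's argument relies on.
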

\begin{proof}
Conditioning on the value of $\Delta W_m(t_{k}) = W_m(t_k) - W_m(t_{k-1})$, we obtain
\begin{multline}\label{eq:stepsback}
f_m(t_{k-1}, x) = e^{-\rho_m(t_k, x) \Delta t} \sum_{\delta=(\Delta x)^{\pm}} \mathbb{P}(\Delta W_m(t_{k}) = \delta) \\
\times  \mathbb{E} \left\{ \left. \exp\left(-\sum_{i=k+1}^{N} \rho_m(t_i, S_m(t_i)) \Delta t \right) g(S_m(T)) \right| S_m(t_{k-1}) = x, \Delta W_m(t_{k}) = \delta  \right\} \\
= e^{-\rho_m(t_k, x) \Delta t} \\
\times \sum_{j=\pm} p_m^j \, \mathbb{E} \left\{ \left. \exp\left(-\sum_{i=k+1}^{N} \rho_m(t_i, S_m(t_i)) \Delta t \right) g(S_m(T)) \right| S_m(t_{k}) = x^{j} \right\} \\
= e^{-\rho_m(t_k, x) \Delta t} \left\{2 p_m^+ \, f_m(t_{k}, x^+) + 2 p_m^- \, f_m(t_{k}, x^-)\right\} .
\end{multline}
Now subtracting $f_m(t_{k}, x)$, dividing by $\Delta t$, and arranging the terms, one can get the difference equation for $f_m$ in the statement of the lemma.

The uniqueness follows by induction, starting with the boundary condition at time $T$ and proceeding with $-\Delta t$ steps backward in time by formula (\ref{eq:stepsback}).
\end{proof}

In the sequel we need a special case of Lemma \ref{le:dFKback} and we use the notations introduced in the previous Section \ref{sec:DBSM}:
\begin{equation}\label{eq:dFK2}
f_m(t_k, x) := \mathbb{E}_{\mathbb{Q}_m} \left\{\left. r_m^{k-N} g(S_m(T)) \right| S_m(t_k) = x \right\}  \end{equation}
$(t_k \ge 0, x \in \mathbb{R})$, where  $S_m$ is defined by (\ref{eq:Qmlinear}), and the continuous function $g : \mathbb{R} \to \mathbb{R}$ is used to describe a claim of the form $g(S_m(T))$ at maturity $T = N \Delta t$. Here $r_m = 1 + r \Delta t$, that is, $\rho_m = \log(r_m)/\Delta t$ in the previous Lemma \ref{le:dFKback}. Also, we now substitute
\begin{equation}\label{eq:spm2}
x^+ := x u_m ,  \qquad x^- := x d_m , \qquad p_m^{\pm} = q_m^{\pm}.
\end{equation}
\begin{lem}\label{le:dFK2}
For any $m > m_0$ fixed, the discrete Feynman--Kac functional (\ref{eq:dFK2}) is the unique solution of the difference equation
\begin{multline}\label{eq:FK_diffeq2}
\frac{f_m(t_{k},x) - f_m(t_{k-1}, x)}{\Delta t} \\
+ \frac12 r_m^{-1} \, \frac{2q_m^+ \, f_m(t_{k}, x^+) + 2q_m^- \, f_m(t_{k}, x^-) -  2f_m(t_{k}, x)}{(\Delta x)^2} \\
+ \frac{r_m^{-1} - 1}{\Delta t} f_m(t_{k}, x) = 0,
\end{multline}
with the boundary condition $f_m(T, x) = g(x)$.
\end{lem}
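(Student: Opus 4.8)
The plan is to obtain Lemma~\ref{le:dFK2} as a direct specialization of the ``backward'' discrete Feynman--Kac Lemma~\ref{le:dFKback}; no new argument is needed beyond bookkeeping and one short computation.

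First I would exhibit the ingredients of Lemma~\ref{le:dFKback} that produce (\ref{eq:dFK2}) and (\ref{eq:FK_diffeq2}). The $\mathbb{Q}_m$-dynamics (\ref{eq:Qmlinear}) of the price is an instance of the time-dependent discrete It\^o process (\ref{eq:dGtIP}) with the continuous coefficients $\mu(t_k,x):=rx$ and $\sigma(t_k,x):=\sigma x$, driven by the nearest-neighbor walk $W_m$ of (\ref{eq:Wm}). Under $\mathbb{Q}_m$ the increment of $W_m$ is $\Delta W_m(t_{k+1})=\pm 2^{-m}+\frac{\mu-r}{\sigma}2^{-2m}$ according as the coin $X_m(t_{k+1})$ is $+1$ or $-1$; hence $W_m$ has up- and down-steps of magnitudes $(\Delta x)^{\pm}=2^{-m}\pm\frac{\mu-r}{\sigma}2^{-2m}$ with probabilities $p_m^{\pm}=q_m^{\pm}$ --- these being the defining probabilities of $\mathbb{Q}_m$, which lie in $(0,1)$ precisely because $m>m_0$, see (\ref{eq:qm_positive}). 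A one-line substitution then shows that the abbreviation $x^{\pm}=x+\mu(t_k,x)\Delta t\pm\sigma(t_k,x)(\Delta x)^{\pm}$ of Lemma~\ref{le:dFKback} collapses to $x^{+}=xu_m$, $x^{-}=xd_m$, matching (\ref{eq:spm2}).

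Second, taking $\rho_m(t_i,S_m(t_i)):=\log(r_m)/\Delta t$, which is constant in time and in space, turns the discount product $\exp(-\sum_{i=k+1}^{N}\rho_m(t_i,S_m(t_i))\Delta t)$ into $r_m^{-(N-k)}=r_m^{k-N}$, so the functional (\ref{eq:dFKb}) becomes exactly (\ref{eq:dFK2}) (the generic expectation in (\ref{eq:dFKb}) being taken under the measure carrying the step probabilities $p_m^{\pm}$, i.e.\ under $\mathbb{Q}_m$). Since also $e^{-\rho_m(t_k,x)\Delta t}=r_m^{-1}$, feeding these choices into the difference equation of Lemma~\ref{le:dFKback} reproduces (\ref{eq:FK_diffeq2}) verbatim (the normalization is the fixed $(\Delta x)^2=2^{-2m}=\Delta t$, and $\tfrac12\cdot 2q_m^{\pm}=q_m^{\pm}$), and the boundary condition $f_m(T,x)=g(x)$ is inherited unchanged. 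Uniqueness is inherited as well; equivalently, it follows by backward induction on $k=N,N-1,\dots,0$ from the one-step identity $f_m(t_{k-1},x)=r_m^{-1}\bigl(q_m^{+}f_m(t_k,xu_m)+q_m^{-}f_m(t_k,xd_m)\bigr)$, obtained by conditioning on $\Delta W_m(t_k)$ and using the time-homogeneous Markov property of $S_m$ under $\mathbb{Q}_m$ --- note this identity is precisely (\ref{eq:Vmtk}), and rearranging it gives back (\ref{eq:FK_diffeq2}).

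I do not anticipate any real obstacle: the content of the lemma is entirely a specialization of Lemma~\ref{le:dFKback}. The only things that genuinely need care are the role of the hypothesis $m>m_0$ (it is exactly what makes $q_m^{\pm}\in(0,1)$, so that $\mathbb{Q}_m$ is a bona fide probability and $W_m$ a bona fide random walk with the step magnitudes above), and keeping the identifications of step magnitudes and probabilities straight so that the factor arrangement in (\ref{eq:FK_diffeq2}) comes out as stated.
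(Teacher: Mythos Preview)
Your proposal is correct and follows exactly the paper's own route: the paper presents Lemma~\ref{le:dFK2} without a separate proof, merely declaring it ``a special case of Lemma~\ref{le:dFKback}'' with the substitutions $\rho_m=\log(r_m)/\Delta t$, $x^+=xu_m$, $x^-=xd_m$, $p_m^{\pm}=q_m^{\pm}$, which is precisely the bookkeeping you carry out. Your additional verification that the $\mathbb{Q}_m$-dynamics (\ref{eq:Qmlinear}) indeed sends $x$ to $xu_m$ or $xd_m$, and your remark on the role of $m>m_0$, are helpful elaborations but do not depart from the paper's approach.
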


Here is a \emph{time discrete version of the Black--Scholes equation} based on the discrete It\^o process (\ref{eq:Qmlinear}) and functional (\ref{eq:dFK2}):
\begin{lem}\label{le:dBlackS}
Suppose that $g \in C^3_c(\mathbb{R})$. Then for any $m > m_0$ fixed, the discrete Feynman--Kac functional (\ref{eq:dFK2}) is an approximate solution of the difference-differential equation
\begin{multline}\label{eq:FK_diffeq3}
\frac{f_m(t_{k},x) - f_m(t_{k-1}, x)}{\Delta t}
+ r x \partial_x f_m(t_k, x) + \frac12 \sigma^2 x^2 \partial_{xx} f_m(t_k, x) - r f_m(t_k, x) \\
=  O(2^{-m}),
\end{multline}
with the boundary condition $f_m(T, x) = g(x)$.
\end{lem}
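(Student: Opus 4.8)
The plan is to start from the exact difference equation \eqref{eq:FK_diffeq2} furnished by Lemma \ref{le:dFK2} and to replace the discrete spatial increments $f_m(t_k,x^\pm)$ by their Taylor expansions about $x$. First I would record the regularity that makes this legitimate: since $g\in C^3_c(\mathbb{R})$, Lemma \ref{le:diff_fm} (with $\ell=3$) shows that $f_m(t_k,\cdot)\in C^3$, and formula \eqref{eq:jth_der} gives, for $0\le j\le 3$,
\[
(\partial_x)^j f_m(t_k,\xi)=r_m^{k-N}\sum_{i}\binom{N-k}{i}(q_m^+)^i(q_m^-)^{N-k-i}\left(\frac{s_i}{\xi}\right)^j g^{(j)}(s_i),\quad s_i=\xi u_m^i d_m^{N-k-i}.
\]
Because $g^{(j)}$ vanishes outside a fixed compact set, only summands with $s_i$ in that set contribute, and there $(s_i/\xi)^j$ stays bounded once $\xi$ is confined to a compact subset of $(0,\infty)$; hence $|(\partial_x)^j f_m(t_k,\xi)|$ and, taking $j=0$, $|f_m(t_k,\xi)|$ are bounded by a constant depending only on $g,\mu,\sigma,r,T$ and the range of $\xi$, uniformly in $m$ and $k$.

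Next I would expand. Recall $\Delta t=2^{-2m}=(\Delta x)^2$ and, by \eqref{eq:spm2} together with \eqref{eq:um_dm}, $x^+-x=\sigma x\,2^{-m}+\mu x\,2^{-2m}$ and $x^--x=-\sigma x\,2^{-m}+\mu x\,2^{-2m}$, both $O(2^{-m})$. Taylor's theorem with Lagrange remainder gives
\[
f_m(t_k,x^\pm)=f_m(t_k,x)+(x^\pm-x)\partial_x f_m(t_k,x)+\frac12(x^\pm-x)^2\partial_{xx}f_m(t_k,x)+\frac16(x^\pm-x)^3\partial_{xxx}f_m(t_k,\xi^\pm)
\]
with $\xi^\pm$ between $x$ and $x^\pm$. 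Substituting into the numerator $2q_m^+f_m(t_k,x^+)+2q_m^-f_m(t_k,x^-)-2f_m(t_k,x)$ of \eqref{eq:FK_diffeq2}, the zeroth-order terms cancel since $2q_m^++2q_m^-=2$; using \eqref{eq:qmqm} a direct computation gives the first-order coefficient $2q_m^+(x^+-x)+2q_m^-(x^--x)=2rx\,\Delta t$ (the risk-neutral drift) and the second-order coefficient $2q_m^+(x^+-x)^2+2q_m^-(x^--x)^2=2\sigma^2x^2\,\Delta t+O(2^{-3m})$, while the Lagrange remainder is $O(2^{-3m})$ by the uniform bound on $\partial_{xxx}f_m$ and $|x^\pm-x|^3=O(2^{-3m})$. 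Dividing by $(\Delta x)^2=2^{-2m}$, multiplying by $\frac12 r_m^{-1}$, and using $r_m^{-1}=1-r2^{-2m}+O(2^{-4m})$, one obtains
\[
\frac12 r_m^{-1}\,\frac{2q_m^+f_m(t_k,x^+)+2q_m^-f_m(t_k,x^-)-2f_m(t_k,x)}{(\Delta x)^2}=rx\,\partial_x f_m(t_k,x)+\frac12\sigma^2x^2\,\partial_{xx}f_m(t_k,x)+O(2^{-m}),
\]
whereas $\frac{r_m^{-1}-1}{\Delta t}=-r+O(2^{-2m})$, so the last term of \eqref{eq:FK_diffeq2} equals $-rf_m(t_k,x)+O(2^{-2m})$. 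Adding the three terms of \eqref{eq:FK_diffeq2} and using that the equation is identically zero yields exactly \eqref{eq:FK_diffeq3}, with the boundary condition $f_m(T,x)=g(x)$ inherited from Lemma \ref{le:dFK2}.

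The only real obstacle is making the error estimates genuinely uniform, i.e. ensuring the third-order Taylor remainder and the derivative bounds do not degenerate as $m\to\infty$; note that along a single path the factor $u_m^i$ can be as large as roughly $e^{\sigma T 2^m}$, so one cannot afford to bound $(s_i/\xi)^j$ crudely. This is precisely where the hypothesis $g\in C^3_c$ enters, via Lemma \ref{le:diff_fm}: the compact support of $g'''$ kills all summands in \eqref{eq:jth_der} except those with $s_i$ in a bounded set, keeping $\sup_{\xi}|\partial_{xxx}f_m(t_k,\xi)|$ bounded independently of $m$ and $k$ for $\xi$ in any fixed compact subset of $(0,\infty)$, and hence the $O(2^{-m})$ in \eqref{eq:FK_diffeq3} is uniform in $k$ and locally uniform in $x$. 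Everything else is routine bookkeeping of the expansions of $u_m,d_m,q_m^\pm,r_m$ already recorded in \eqref{eq:um_dm}--\eqref{eq:qmqm}.
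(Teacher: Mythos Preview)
Your argument is correct and follows the same route as the paper: start from the exact difference equation of Lemma~\ref{le:dFK2}, Taylor-expand $f_m(t_k,x^\pm)$ about $x$ to second order with a third-order Lagrange remainder, and collect coefficients using \eqref{eq:um_dm}--\eqref{eq:qmqm}. The only notable difference is in how the third derivative is controlled uniformly in $m$. You kill all summands in \eqref{eq:jth_der} with $s_i$ outside $\operatorname{supp}g'''$ and then bound $(s_i/\xi)^3$ on the survivors by restricting $\xi$ to compacta; this is correct but yields only local uniformity in $x$. The paper instead notices that $(s_i/\xi)^3=(u_m^i d_m^{N-k-i})^3$ is actually independent of $\xi$, so one can sum the full binomial series directly: $q_m^+u_m^3+q_m^-d_m^3=1+3(r+\sigma^2)2^{-2m}+O(2^{-3m})$, giving $|\partial_{xxx}f_m(t_k,\xi)|\le M\,e^{(3(r+\sigma^2)+O(2^{-m}))T}$ uniformly in $\xi>0$, with compact support used only through $M:=\sup|g'''|<\infty$. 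Either estimate suffices for the lemma; the paper's is slightly cleaner and avoids the compactness restriction on $\xi$.
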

\begin{proof}
We apply a second order Taylor expansion with a third order error term in the $x$ variable to the second term in the equation (\ref{eq:FK_diffeq2}), using formulas (\ref{eq:um_dm}) and (\ref{eq:spm2}). First,
\begin{eqnarray*}
x^{\pm} - x &=& x \left(\pm \sigma 2^{-m} + \mu 2^{-2m} \right), \\
(x^{\pm} - x)^2 &=& x^2 \left(\sigma^2 2^{-2m} + O(2^{-3m}) \right), \\
(x^{\pm} - x)^3 &=& x^3 \left(\pm \sigma^3 2^{-3m} + O(2^{-4m}) \right) = O(2^{-3m}) .
\end{eqnarray*}

Second, if $M := \sup_{s > 0} |g'''(s)|$, then by (\ref{eq:jth_der}) we obtain that
\begin{multline*}
|\partial_{xxx}f_m(t_k, \xi)| \le M \sum_{i=0}^{N-k} \binom{N-k}{i} (q_m^+ \, u_m^3 )^i \, (q_m^-  \, d_m^3)^{N-k-i} \\
= M |q_m^+ \, u_m^3 + q_m^-  \, d_m^3|^{N-k}.
\end{multline*}
Since
\begin{multline*}
q_m^+ \, u_m^3 + q_m^-  \, d_m^3 \\
=  \left(\frac12 + \frac12 \frac{r-\mu}{\sigma}2^{-m} \right) \left( 1 + 3 \sigma 2^{-m} + 3(\mu+\sigma^2) 2^{-2m} + O(2^{-3m}) \right)  \\
+  \left(\frac12 - \frac12 \frac{r-\mu}{\sigma}2^{-m} \right) \left( 1 - 3 \sigma 2^{-m} + 3(\mu+\sigma^2) 2^{-2m} + O(2^{-3m}) \right) \\
=   1 + 3(r+\sigma^2)2^{-2m} + O(2^{-3m}),
\end{multline*}
it follows that
\begin{multline*}
\left|\partial_{xxx}f_m(t_k, \xi) (x^{\pm} - x)^3 \right| \\
\le M \left| 1 + 3(r+\sigma^2)2^{-2m} + O(2^{-3m})\right|^{(T-t_k)2^{2m}} O(2^{-3m}) \\
\le M e^{\left(3(r+\sigma^2) + O(2^{-m})\right)T} O(2^{-3m}) = O(2^{-3m}).
\end{multline*}

Third, by the above-mentioned Taylor approximation,
\begin{multline*}
2q_m^+ \, f_m(t_{k}, x^+) + 2q_m^- \, f_m(t_{k}, x^-) -  2f_m(t_{k}, x) \\
= \left(1 + \frac{r-\mu}{\sigma}2^{-m} \right)
\bigg{\{} f_m(t_k, x) + \partial_x f_m(t_k, x) \, x \left( \sigma 2^{-m} + \mu 2^{-2m}  \right)  \\
+  \frac12 \partial_{xx}f_m(t_k, x) \, x^2 \left(\sigma^2 2^{-2m} + O(2^{-3m}) \right) + O(2^{-3m}) \bigg{\}} \\
+ \left(1 - \frac{r-\mu}{\sigma}2^{-m} \right)
\bigg{\{}f_m(t_k, x) + \partial_x f_m(t_k, x) \, x \left(- \sigma 2^{-m} + \mu 2^{-2m} \right) \\
+ \frac12 \partial_{xx}f_m(t_k, x) \, x^2 \left(\sigma^2 2^{-2m} + O(2^{-3m}) \right) + O(2^{-3m})\bigg{\}}
- 2f_m(t_{k}, x) \\
= \left\{ 2rx \, \partial_x f_m(t_k, x) + \sigma^2 x^2 \, \partial_{xx} f_m(t_k, x)\right\} 2^{-2m} + O(2^{-3m}).
\end{multline*}

Thus
\begin{multline*}
\frac12 r_m^{-1} \frac{2q_m^+ \, f_m(t_{k}, x^+) + 2q_m^- \, f_m(t_{k}, x^-) -  2f_m(t_{k}, x)}{(\Delta x)^2} \\
=  \left(1 + O(2^{-2m})\right)  \left\{ rx \, \partial_x f_m(t_k, x) + \frac12\sigma^2 x^2 \, \partial_{xx} f_m(t_k, x) + O(2^{-m}) \right\}.
\end{multline*}
Substituting this into (\ref{eq:FK_diffeq2}), together with $(r_m^{-1} - 1)/\Delta t = -r (1+O(2^{-2m}))$, we get the statement of the lemma.
\end{proof}

\begin{thm}\label{th:BS_lim}
Suppose that $g \in C_c^3(\mathbb{R})$. Then
\begin{multline}\label{eq:fm_lim}
\lim_{m \to \infty} f_m(t^{(m)}, x) = \lim_{m \to \infty} r_m^{\lfloor t2^{2m} \rfloor - \lfloor T2^{2m} \rfloor} \mathbb{E}^x_{\mathbb{Q}_m}\left( g(S_m(T^{(m)} - t^{(m)})) \right) \\
=  e^{-r(T-t)} \mathbb{E}^x_{\mathbb{Q}}\left( g(S(T-t))\right) = f(t, x) ,
\end{multline}
for any $0 \le t \le T$ and $x > 0$. This implies that the price function $f(t, x)$ of the claim $g(S(T))$ is a solution of the Black--Scholes partial differential equation
\begin{equation}\label{eq:PDE}
\partial_t f(t, x) + r x \partial_x f(t, x) + \frac12 \sigma^2 x^2 \partial_{xx} f(t, x) - r f(t, x) = 0,
\end{equation}
with boundary condition $f(T,x)=g(x)$, where $0 \le t \le T$ and $x > 0$ are arbitrary.
\end{thm}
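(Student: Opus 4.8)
The plan is to prove the two assertions in turn. The limit relation (\ref{eq:fm_lim}) is immediate: since the restriction to $\mathbb{R}_+$ of a function in $C^3_c(\mathbb{R})$ lies in $C_c(\mathbb{R}_+)$, Theorem \ref{th:slimit} applies verbatim and gives $f_m(t^{(m)},x) \to f(t,x)$ uniformly for $t \in [0,T]$ and $x > 0$; taking $t = T$, where $f_m(T,x) = g(x)$, yields the boundary condition $f(T,x) = g(x)$.

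For the partial differential equation I would start from Lemma \ref{le:dBlackS}, which for $g \in C^3_c$ asserts that, for every $m > m_0$, every $x > 0$ and every grid point $t_k \le T$,
\[
\frac{f_m(t_k,x) - f_m(t_{k-1},x)}{\Delta t} = -\left( r x\, \partial_x f_m(t_k,x) + \frac12 \sigma^2 x^2\, \partial_{xx} f_m(t_k,x) - r f_m(t_k,x) \right) + R_m(t_k,x),
\]
where $R_m(t_k,x) = O(2^{-m})$ uniformly in $t_k \in [0,T]$ for each fixed $x$. By Theorem \ref{th:slimit}, by (the proof of) Theorem \ref{th:appr_port}, and by the remark following it --- all applicable since $g \in C^3_c$ --- the quantities $f_m$, $\partial_x f_m$, $\partial_{xx} f_m$ converge, as $m \to \infty$ with $t_k \to t$, to $f$, $\partial_x f$, $\partial_{xx} f$, uniformly for $t \in [0,T]$ and $x$ in compact subsets of $(0,\infty)$. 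Hence the right-hand side above, without the error term, converges uniformly to
\[
h(t,x) := -\left( r x\, \partial_x f(t,x) + \frac12 \sigma^2 x^2\, \partial_{xx} f(t,x) - r f(t,x) \right),
\]
which is continuous on $[0,T] \times (0,\infty)$ by the integral representation used in the proof of Lemma \ref{le:diff_f}, with $h(T,x) = -\left( r x g'(x) + \frac12 \sigma^2 x^2 g''(x) - r g(x) \right)$.

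The main step is then to turn this into a statement about $\partial_t f$. Fixing $x > 0$ and $0 \le t \le T$, I would sum the first displayed identity above, multiplied by $\Delta t$, over the grid points $t_k \in (t^{(m)}, T]$: the left-hand side telescopes to $f_m(T,x) - f_m(t^{(m)},x) = g(x) - f_m(t^{(m)},x)$; the error contributes $T \cdot O(2^{-m}) \to 0$; and the remaining sum, a Riemann sum of the continuous function $s \mapsto h(s,x)$ whose integrand converges uniformly, tends to $\int_t^T h(s,x)\di s$. Passing to the limit and using $f_m(t^{(m)},x) \to f(t,x)$ gives
\[
f(t,x) = g(x) - \int_t^T h(s,x)\di s \qquad (0 \le t \le T,\ x > 0).
\]
Since $h(\cdot,x)$ is continuous on $[0,T]$, the fundamental theorem of calculus then shows that $f(\cdot,x)$ is continuously differentiable there with $\partial_t f(t,x) = h(t,x)$; inserting the definition of $h$ and rearranging is precisely the Black--Scholes equation (\ref{eq:PDE}), with boundary value $g$.

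The hard part will be the passage from the discrete difference quotients to $\partial_t f$: one must check that the convergence of $\partial_x f_m$ and $\partial_{xx} f_m$, which is established only ``at $t^{(m)}$'', is uniform enough in the time variable for the Riemann-sum step, and that $h$ genuinely extends continuously to $t = T$, where the Gaussian kernel in Lemma \ref{le:diff_f} degenerates --- this is the point at which the compact support of $g$ and the boundedness of $g'$, $g''$ are needed. Once these are settled, the identity $f(t,x) = g(x) - \int_t^T h(s,x)\di s$ does the remaining work.
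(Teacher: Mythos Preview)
Your proposal is correct and follows the same overall strategy as the paper: invoke Theorem~\ref{th:slimit} for the limit (\ref{eq:fm_lim}), then pass to the limit in the approximate discrete Black--Scholes equation of Lemma~\ref{le:dBlackS}, using the convergence of $f_m$, $\partial_x f_m$, $\partial_{xx} f_m$ (Theorem~\ref{th:appr_port} and the remark after it) to handle the spatial terms.

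The one genuine difference lies in how you extract $\partial_t f$. The paper simply replaces $t_k$ by $t^{(m)}$ in (\ref{eq:FK_diffeq3}), lets $m\to\infty$ on the spatial side, and then \emph{asserts} that consequently the discrete time difference quotient converges and that its limit must be $\partial_t f(t,x)$; this is terse and leans implicitly on a ``uniform convergence of derivatives'' principle. You instead telescope the discrete identity over the grid, pass to the limit in the resulting Riemann sum to obtain the integral representation $f(t,x)=g(x)-\int_t^T h(s,x)\,\di s$, and only then differentiate via the fundamental theorem of calculus. Your route is slightly longer but more self-contained: it makes explicit exactly why the limit of the difference quotients is the honest time derivative, and it isolates cleanly the two analytic checks you flag (uniformity in $t$ of the spatial-derivative convergence, and continuity of $h$ up to $t=T$), both of which are indeed available here from the compact support of $g$ and the integral formulas in Lemma~\ref{le:diff_f}.
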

\begin{proof}
The limit in (\ref{eq:fm_lim}) can be proved similarly as the limit in Theorem \ref{th:slimit}. The fact that $f(t, x)$ solves the equation (\ref{eq:PDE}) follows from (\ref{eq:FK_diffeq3}) replacing $t_k$ by $t^{(m)}$ and taking a limit of the right hand side as $m \to \infty$. This also proves that $f(t, x)$ is continuously differentiable with respect to $t$ as well and
\[
\lim_{m \to\infty} \frac{f_m(t^{(m)},x) - f_m(t^{(m)} - \Delta t, x)}{\Delta t} = \partial_t f(t, x) .
\]
\end{proof}

Lemma \ref{le:dBlackS} and Theorem \ref{th:BS_lim} can easily be extended to the case of European put and call options, replacing $f_m$ by the smoothed versions $P_m^{(n)}$ or $C_m^{(n)}$ of the price processes.


\end{document}